\newtheorem{thm}{Theorem}[section]
\newtheorem{lemma}[thm]{Lemma}
\newtheorem{corr}[thm]{Corollary}
\newtheorem{fact}[thm]{Fact}
\title{Universal Cycles for Weak Orders}
\author{Victoria Horan\thanks{\texttt{vhoran@asu.edu}} and Glenn Hurlbert\thanks{\texttt{hurlbert@asu.edu}} \\ School of Mathematics and Statistics \\ Arizona State University \\ Tempe, AZ  85287 USA}
\begin{document}

\maketitle

\begin{abstract}
	Universal cycles are generalizations of de Bruijn cycles and Gray codes that were introduced originally by Chung, Diaconis, and Graham in 1992. They have been developed by many authors since, for various combinatorial objects such as strings, subsets, permutations, partitions, vector spaces, and designs.  One generalization of universal cycles, which require almost complete overlap of consecutive words, is $s$-overlap cycles, which relax such a constraint.  In this paper we study weak orders, which are relations that are transitive and complete.  We prove the existence of universal and $s$-overlap cycles for weak orders, as well as for fixed height and/or weight weak orders, and apply the results to cycles for ordered partitions as well.
\end{abstract}

\section{Introduction}

A \textbf{weak order on $[n]$} is a relation $\preceq$ that is transitive and complete.  We write $x \equiv y$ if $x \preceq y$ and $y \preceq x$, and we write $x \prec y$ if $x \preceq y$ but $y \not \preceq x$.  A weak order on $[n]$ can be written as a permutation of $[n]$ with consecutive symbols separated by $\equiv$ or $\prec$. (See \cite{Knuth}, fasc.~2, problem 105.)  We use the notation $\mathcal{W}(n)$ to represent the set of all weak orders on $[n]$.  For example, $\mathcal{W}(3)$ contains the weak orders:
$$\begin{array}{cccc} 1 \equiv 2 \equiv 3, & 1 \equiv 2 \prec 3, & 1 \prec 2 \equiv 3, & 1 \equiv 3 \prec 2, \\ 2 \equiv 3 \prec 1, & 2 \prec 1 \equiv 3, & 3 \prec 1 \equiv 2, & 1 \prec 3 \prec 2, \\ 2 \prec 1 \prec 3, & 1 \prec 2 \prec 3, & 3 \prec 1 \prec 2, & 2 \prec 3 \prec 1, \\ 3 \prec 2 \prec 1. \end{array}$$

We are primarily interested in finding universal cycles for $\mathcal{W}(n)$ and some of its subsets.  Let $\mathcal{C}$ be a set of $k$ strings, each of length $n$.  A \textbf{universal cycle} (ucycle) $a_0 a_1 \ldots a_{k-1}$ for  $\mathcal{C}$ is a word such that each object $c \in \mathcal{C}$ appears exactly once as a subword $a_{i+1} a_{i+2} \ldots a_{i+n}$, where subscripts are taken modulo $k$.  That is, we require objects at the end of the string to wrap-around to the beginning; for example some object will appear as the subword $a_{k-1}a_ka_0a_1 \ldots a_{n-3}$.  Note in particular that every $a_{i+1}a_{i+2} \ldots a_{i+n}$ is a string in $\mathcal{C}$.  For example, a ucycle for the set of binary strings of length three is the de Bruijn cycle $$00010111.$$

Ucycles are generalizations of de Bruijn cycles and Gray codes that were originally introduced by Chung, Diaconis, and Graham in 1992 \cite{UC}.  They have been developed for various combinatorial objects, such as binary strings, subsets, restricted multisets, permutations, partitions, lattice paths, and designs, by many authors since (see \cite{Godbole2, UC, Dewar, Subsets, Isaak, Jackson, RPKnuth, PermsUC, TiedPerms}).  Note that this definition requires our subwords to be contiguous.  Others have considered variations in which subwords need not be contiguous, such as rosaries \cite{rosaries}, combs \cite{cooper}, or omnisequences \cite{Omni}.

Some combinatorial objects do not readily lend themselves to the universal cycle structure.  For example, all permutations of $[n]$ written as strings can never be listed in a universal cycle because the first $n-1$ letters will always completely determine the last letter.  This forces a set of multiple disjoint cycles instead of one long cycle.  To deal with this problem, we may wish to either use a different representation for our set of objects, or generalize the concept of a ucycle.  Returning to our permutation example, we can define an alternate representation using the first $n-1$ letters of each permutation since the last letter is completely determined by the first $n-1$.  We can generalize ucycles by using overlap cycles, first defined in \cite{Godbole}.  An \textbf{$s$-overlap cycle} (ocycle) is an ordered listing of the objects so that the last $s$ letters of one word are the first $s$ letters of its successor in the listing.  Note that an $(n-1)$-ocycle is a ucycle.

In this paper we prove that ucycles exist for all weak orders on $[n]$ for a certain representation of weak orders that we discuss below.  (Theorem \ref{Wn}).  We also show similar results for weak orders of fixed weight and/or fixed height (Theorems \ref{Wkn}, \ref{Wnh}, and \ref{Wknh}), and for those over a fixed multiset (Lemma \ref{LWkn}), where the relevant terms are defined below.  We then apply these results to obtain ucycles for various types of ordered partitions, and construct $s$-ocycles for various sets of weak orders as well (Theorem \ref{OC}, Theorem \ref{OCL}, Theorem \ref{OCWknh}, Theorem \ref{OCWnh}).

\section{Definitions}\label{DEFN}
  For each word $w \in \mathcal{W}(n)$ we can define the \textbf{height} of an element $j \in [n]$ in the word $w$ to be the number of symbols $\prec$ that precede it in the weak order.  This gives us an alternative representation for $w \in \mathcal{W}(n)$ by a word $w_1w_2 \ldots w_n$ where letter $w_j$ is the height of element $j$ in the word $w$.  The set $\mathcal{W}(3)$ listed above contains the corresponding words:
  $$\begin{array}{cccc}
  	000, & 001, & 011, & 010, \\
	100, & 101, & 110, & 021, \\
	102, & 012, & 120, & 201, \\
	210.
  \end{array}$$  We will utilize this word representation of each weak order, and so we write $w = w_1w_2 \ldots w_n$.  Note that a weak order on $[n]$ in this representation will always have length $n$.  In fact, $\{w_1, w_2, \ldots , w_n\} = \{0, 1, 2, \ldots , h\}$ for some $h \leq n-1$.  This observation leads us to define the \textbf{height} $\hbox{\textsf{ht}}(w)$ of a weak order $w \in \mathcal{W}(n)$ to be such an $h$, i.e., $\hbox{\textsf{ht}}(w) = \max\{ h \mid h \hbox{ is the height of } j \in [n] \hbox{ in } w\}$.  It is often useful to restrict our attention to the subset of $\mathcal{W}(n)$ with a specific or fixed height.  Let $0 \leq h < n$, and define $$\mathcal{W}(n,h) = \{w \in \mathcal{W}(n) \mid \hbox{\textsf{ht}}(w) = h\}.$$

We will also consider the subsets of $\mathcal{W}(n)$ that correspond to a fixed multiset or fixed weight.  Define the \textbf{multiset} of a weak order $w = w_1w_2 \ldots w_n$, or  \textsf{ms}$(w)$, to be the unordered multiset of elements $\{w_1, w_2, \ldots , w_n\}$.  For a fixed multiset $M$, we define $$\mathcal{W}_M(n) = \{ w \in \mathcal{W}(n) \mid \hbox{\textsf{ms}}(w) = M \}.$$

The \textbf{weight} of a weak order $w = w_1w_2 \ldots w_n$ is the sum of its letters.  We denote this by $$\hbox{\textsf{wt}}(w) = \sum_{i=1}^n w_i.$$  To identify the subset of $\mathcal{W}(n)$ that contains only weak orders with weight $k$, we write $\mathcal{W}_k(n)$.  When considering $w = w_1w_2 \ldots w_n \in \mathcal{W}_k(n)$, note that $w_n = k - \sum_{i=1}^{n-1} w_i$, and so the first $n-1$ letters of $w$ completely define it.  We define the \textbf{prefix} of a word $w$ to be $w^- = w_1w_2 \ldots w_{n-1}$.  When $w^-$ completely identifies $w$ as a word in $\mathcal{W}_k(n)$, this is called the \textbf{prefix representation} or \textbf{prefix notation}.  Similarly, $w^+$, or the \textbf{suffix} of $w$, is defined to be $w_2w_3 \ldots w_n$.

When representing weak orders as words, we will use various types of abbreviations.  We define an exponential notation to represent repeated elements, i.e. $$a^i = \overbrace{a \ldots a}^{i \hbox{\footnotesize{ times}}}.$$  Also, we will represent runs of consecutive elements as: $$[i,j] = i(i+1)(i+2) \ldots (j-1)j.$$

An important and frequently used fact about weak orders is given below.

\begin{fact}\label{permWO}
	If $w = w_1 w_2 \ldots w_n \in \mathcal{W}(n)$, then any permutation $w'$ of the letters of $w$ is also in $\mathcal{W}(n)$.
\end{fact}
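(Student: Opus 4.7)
The plan is to prove this by characterizing precisely which length-$n$ words $w_1 w_2 \ldots w_n$ over $\mathbb{Z}_{\geq 0}$ arise as the height representation of some weak order on $[n]$, and then noting that this characterization is invariant under permutation of the letters.

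First I would establish the following characterization: a word $w = w_1 w_2 \ldots w_n$ with nonnegative integer entries is the height representation of a weak order on $[n]$ if and only if $\{w_1, \ldots, w_n\} = \{0, 1, \ldots, h\}$ for some $h \leq n-1$. The ``only if'' direction is already noted in Section \ref{DEFN}: by the definition of height, an element must have height $0$ (the first equivalence class has no $\prec$ preceding it), and the set of heights attained is a contiguous block $\{0, 1, \ldots, h\}$ because each $\prec$ increments the height by exactly one. For the ``if'' direction, given such a word I would explicitly build the weak order: for each $i \in \{0, 1, \ldots, h\}$, let $E_i = \{j \in [n] : w_j = i\}$, which is nonempty by assumption, and define $\preceq$ so that all elements of $E_i$ are equivalent to each other (joined by $\equiv$), and every element of $E_i$ is $\prec$ every element of $E_{i+1}$. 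This relation is complete and transitive, and by construction the height of element $j$ is exactly $w_j$, so the word is indeed the height representation of this weak order.

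With the characterization in hand, the proof of the fact is immediate. If $w' = w'_1 w'_2 \ldots w'_n$ is any permutation of the letters of $w$, then $\{w'_1, \ldots, w'_n\} = \{w_1, \ldots, w_n\} = \{0, 1, \ldots, h\}$, so $w'$ satisfies the characterization and hence is the height representation of some weak order in $\mathcal{W}(n)$.

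The main (minor) obstacle is verifying the characterization; once that is done the fact is essentially a restatement of the observation that the defining condition $\{w_1, \ldots, w_n\} = \{0, 1, \ldots, h\}$ depends only on the multiset of letters of $w$, not on their order.
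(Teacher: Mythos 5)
Your proof is correct, and it follows essentially the same reasoning the paper relies on: the paper states Fact \ref{permWO} without proof, but its justification is exactly the observation in Section \ref{DEFN} that the words in $\mathcal{W}(n)$ are precisely those with letter set $\{0,1,\ldots,h\}$ for some $h \leq n-1$, a condition depending only on the multiset of letters. Your explicit verification of both directions of that characterization is a complete and faithful filling-in of the argument the authors leave implicit.
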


Let $S$ be a set of words, and define $S^- = \{w^- \mid w \in S\}$ and $S^+ = \{w^+ \mid w \in S\}$.  Then using Fact \ref{permWO}, it is clear that $\mathcal{W}^+(n) = \mathcal{W}^-(n)$.

\section{Ucycle Results}

Most of our results are obtained by finding Euler tours in graphs.  An Euler tour is a closed circuit that contains each edge of the graph exactly once.  When a graph contains an Euler tour, we say that the graph is eulerian.  To characterize eulerian graphs, we show that a graph is balanced (indegree is equal to outdegree at each vertex), and weakly connected (underlying undirected graph is connected).  This is summarized in the following theorem.

\begin{thm}\label{euler}
	\emph{(\cite{West}, p. 60)}  A directed graph $G$ is eulerian if and only if it is both balanced and weakly connected.
\end{thm}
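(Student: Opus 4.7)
The statement is the standard directed-Euler-tour characterization, so my plan is to prove it by the classical two-direction argument: the easy (necessity) direction by a local counting argument at vertices, and the harder (sufficiency) direction by a walk-and-splice construction.

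For necessity, assume $G$ has an Euler tour $T$. Traverse $T$ and, for each vertex $v$, pair up its occurrences: every time the tour enters $v$ along some edge, it must leave $v$ along a distinct edge (closing the tour at the final return to the starting vertex completes the last pair). Since $T$ uses every edge exactly once, this pairing is a bijection between the edges of $G$ directed into $v$ and those directed out of $v$, so $d^+(v)=d^-(v)$ at every vertex and $G$ is balanced. Weak connectivity is immediate: the underlying undirected graph of $T$ is a connected walk covering every edge, hence every non-isolated vertex lies in one weakly connected component. (Isolated vertices can be ignored, or disallowed by convention.)

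For sufficiency, assume $G$ is balanced and weakly connected (and nontrivial). I would argue by induction on the number of edges $|E(G)|$. Start a maximal trail $W$ at any vertex $v_0$, always extending along an unused outgoing edge. Because $G$ is balanced, each time $W$ enters an internal vertex $u\neq v_0$ along one edge, it has used an odd number of edges at $u$, so an outgoing edge remains and the trail can be extended. Hence the trail can only get stuck at $v_0$ itself, producing a closed trail $C$. If $C$ uses every edge, we are done. Otherwise, consider $G' = G\setminus E(C)$, which is still balanced (since $C$ contributes equally to in- and out-degree at every vertex). Weak connectivity of $G$ plus the fact that $C$ is a closed walk forces some vertex $u$ on $C$ to be incident with an edge of $G'$; otherwise the edges of $G'$ would be separated in the underlying graph from the edges of $C$. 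Apply the inductive hypothesis to the weakly connected component of $G'$ containing $u$ to obtain a closed trail $C'$ through $u$ covering its edges, then splice $C'$ into $C$ at $u$. Iterating this absorption across all nontrivial components reachable from $C$ yields an Euler tour of $G$.

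The main obstacle is the splicing step, specifically the guarantee that every component of $G'$ with edges actually meets the current closed trail $C$. This is where weak connectivity of the original graph $G$ is crucial: without it, $G'$ could have a nontrivial component entirely disjoint from the vertex set of $C$, and the induction would produce several disjoint closed trails rather than a single tour. I would handle this by formally observing that if an edge of $G'$ had both endpoints in a component disjoint from $V(C)$, then the underlying undirected graph of $G$ would split into at least two components (the one containing $V(C)$ and the one containing that edge), contradicting weak connectivity. With that bridge in hand the induction goes through cleanly.
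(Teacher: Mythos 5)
Your proof is correct, but note that the paper does not prove this statement at all: it is quoted as a known result from West's \emph{Introduction to Graph Theory} (p.~60) and used as a black box throughout. Your argument is the standard one for the directed Euler tour characterization --- necessity by pairing each entrance to a vertex with a departure, sufficiency by extracting a maximal closed trail (which must return to its start because balance always leaves an unused out-edge at any other vertex), deleting it, and splicing in tours of the remaining components, with weak connectivity guaranteeing that some vertex of the current trail still meets an unused edge. The only point worth tightening is the final iteration: after one splice, further components of the remaining graph may only then become incident to the enlarged trail, so the absorption must be repeated until no unused edges remain, with termination guaranteed because each splice strictly increases the number of covered edges; you gesture at this but it deserves an explicit induction on the number of uncovered edges. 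As written, the proof is a sound, self-contained justification of the lemma the paper simply cites.
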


All proofs of the following results will follow the same format.  After constructing the relevant transition graph, we prove that it is eulerian by showing that it is both balanced and connected.  While balanced is usually quite simple, weakly connected is more challenging.  Weakly connected is most often illustrated by showing an undirected walk in the underlying graph from an arbitrary vertex to some specially identified minimum vertex.

We begin with a result over the complete set of weak orders on $[n]$.  Theorem \ref{Wn} is to weak orders as de Bruijn's original theorem \cite{DBS} is to words.

\begin{thm}\label{Wn}
	For all $n \in \mathbb{Z}^+$, there exists a ucycle for $\mathcal{W}(n)$.
\end{thm}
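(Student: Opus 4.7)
The plan is to build the standard transition graph $G$ for $\mathcal{W}(n)$, verify it is eulerian via Theorem~\ref{euler}, and read the ucycle off an Euler tour. I would take $V(G) = \mathcal{W}^-(n) = \mathcal{W}^+(n)$ (the equality coming from Fact~\ref{permWO}) and, for each $w \in \mathcal{W}(n)$, add a directed edge from $w^-$ to $w^+$ labelled by $w_n$. The edge labels along any Euler tour then produce a cyclic string whose length-$n$ windows are precisely the weak orders in $\mathcal{W}(n)$.

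First I would check that $G$ is balanced. For $v \in V(G)$, the out-degree of $v$ counts letters $a$ with $va \in \mathcal{W}(n)$ and the in-degree counts letters $a$ with $av \in \mathcal{W}(n)$. Since Fact~\ref{permWO} says that membership in $\mathcal{W}(n)$ depends only on the multiset of letters, and $\{v_1,\ldots,v_{n-1},a\}$ is the same multiset as $\{a,v_1,\ldots,v_{n-1}\}$, the two counts agree.

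The main obstacle will be weak connectivity, which I plan to establish by exhibiting for every $v \in V(G)$ an undirected walk from $v$ to $0^{n-1}$ that uses only backward edges. The key structural observation is that for any $u \in \mathcal{W}^-(n)$, the set underlying its multiset is either $\{0, 1, \ldots, \hbox{\textsf{ht}}(u)\}$ (case I) or $\{0, 1, \ldots, \hbox{\textsf{ht}}(u)\} \setminus \{j\}$ for a unique $j < \hbox{\textsf{ht}}(u)$ (case II), since $u$ extends to a weak order. At each vertex $u$ along the walk I prepend $a = 0$ in case I and $a = j$ in case II; in either situation $au \in \mathcal{W}(n)$ (case I merely inflates a multiplicity, case II fills the unique gap), so there is a legitimate backward edge from $u$ to $a\, u_1 \cdots u_{n-2} \in \mathcal{W}^-(n)$.

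To close the argument I would induct on $\hbox{\textsf{ht}}(v)$. The prepended letter always satisfies $a < \hbox{\textsf{ht}}(u)$ whenever $\hbox{\textsf{ht}}(u) > 0$, so after $n-1$ iterations the original string $v$ has been entirely shifted out and the current vertex consists of $n-1$ prepended letters, each strictly less than $\hbox{\textsf{ht}}(v)$. Its height is therefore smaller than $\hbox{\textsf{ht}}(v)$, and the induction hypothesis supplies the remainder of the walk to $0^{n-1}$. Combining balance with weak connectivity, Theorem~\ref{euler} yields an Euler tour of $G$, whose label sequence is the desired ucycle for $\mathcal{W}(n)$.
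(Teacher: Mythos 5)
Your proposal is correct and follows essentially the same route as the paper: the same transition graph on $\mathcal{W}^-(n)=\mathcal{W}^+(n)$, balance via Fact~\ref{permWO}, and weak connectivity via a walk to $0^{n-1}$ whose progress is measured by a monovariant (the paper grows the number of zeros by rotating the maximum letter to the end and replacing it with $0$, while you prepend gap-filling letters along backward edges and watch the height drop). Both yield an Euler tour by Theorem~\ref{euler}, hence the ucycle.
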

\begin{proof}
Fix $n \in \mathbb{Z}^+$ and define the graph $G(n)$ as follows:  $$V(G(n)) = \{ v = v_1 \ldots v_{n-1} \mid v = w^- \hbox{ for some } w \in \mathcal{W}(n)\}$$ $$\hbox{ and }$$ $$E(G(n)) = \{(v_1,v_2) \mid v_1 = w^- \hbox{ and } v_2 = w^+ \hbox{ for some } w \in \mathcal{W}(n)\}.$$  Note that the edge set is well-defined, since any prefix $w^-$ of some $w \in \mathcal{W}(n)$ is also the suffix of the word $w_nw_1w_2 \ldots w_{n-1} \in \mathcal{W}(n)$.  We will explicitly show a method to construct a ucycle for $\mathcal{W}(n)$ using the graph $G(n)$ for $n \geq 2$.  Note that for $n=1$, the ucycle is the single letter 0.

By the construction of $G(n)$, it is clear that an Euler tour will correspond to a ucycle.  Using Theorem \ref{euler}, we need only show that the graph is weakly connected and balanced.  The graph is clearly balanced, since any incoming edge $(w_1 \ldots w_{n-1}, w_2 \ldots w_n)$ can be paired with the outgoing edge $(w_2 \ldots w_n, w_3 \ldots w_nw_1)$ at the vertex $w_2 \ldots w_n$.  Next, we will show a path from any vertex $w_1 \ldots w_{n-1}$ to the vertex $00 \ldots 0$.  We may apply the rotation function $\rho$, which maps $w_1 w_2 \ldots w_n$ to $w_2 \ldots w_n w_1$, as many times as necessary until we arrive at some $u = u_1u_2 \ldots u_n \in \mathcal{W}(n)$ with $u_n$ equal to the height of $w$.  Then $u$ is represented in $G(n)$ as the edge $(u_1 \ldots u_{n-1}, u_2 \ldots u_n)$.  Since $u_n$ is a maximum letter in $w$, we must also have $u_1 \ldots u_{n-1}0 \in \mathcal{W}(n)$, and so we have $(u_1 \ldots u_{n-1}, u_2 \ldots u_{n-1} 0) \in E(G(n))$.  Note that $u_2 \ldots u_{n-1}0$ has more zeros than $w_1 \ldots w_{n-1}$.  Repeating this process, we add more zeros at every step, which eventually must terminate when we arrive at the vertex with $n-1$ zeros.  Thus, we have a path from $w_1 \ldots w_{n-1}$ to $00 \ldots 0$, and so $G$ is weakly connected.

\end{proof}

In many cases, restricted subsets of combinatorial objects can be very useful.  For example, in \cite{SawadaRuskey}, Ruskey, Sawada, and Williams prove the existence of ucycles over the set of binary strings of length $n$ with weights $d$ and $d-1$, which is exactly the set of prefixes for binary strings weight $d$.  Given that restrictions on a set may yield additional information or interesting problems, we consider some subsets of $\mathcal{W}(n)$.  Define $$\mathcal{W}_k^-(n) = \{w^- \mid w \in \mathcal{W}_k(n)\}.$$

\begin{thm}\label{Wkn}
	For all $n,k \in \mathbb{Z}^+$ with $k \leq {n \choose 2}$, there exists a ucycle for $\mathcal{W}_k^-(n)$.
\end{thm}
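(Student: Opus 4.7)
My plan is to build a transition graph $G(n,k)$ in direct analogy with the graph $G(n)$ of Theorem \ref{Wn}, but one level smaller because each element of $\mathcal{W}_k^-(n)$ has length $n-1$ rather than $n$. Specifically, I would set
\[
V(G(n,k)) = \{v = v_1 \ldots v_{n-2} \mid v = (w^-)^- \hbox{ for some } w \in \mathcal{W}_k(n)\},
\]
\[
E(G(n,k)) = \{(u, u') \mid u = (w^-)^-,\ u' = (w^-)^+ \hbox{ for some } w \in \mathcal{W}_k(n)\}.
\]
Since each $w \in \mathcal{W}_k(n)$ is determined by $w^-$ via $w_n = k - \sum_{i=1}^{n-1} w_i$, the edges are in bijection with $\mathcal{W}_k^-(n)$, so an Euler tour gives the desired ucycle. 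After disposing of the degenerate cases $n \le 2$ by hand, I would close the argument by applying Theorem \ref{euler}, which requires proving balance and weak connectivity.

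Balance should follow quickly from Fact \ref{permWO}. At any vertex $v = v_1 \ldots v_{n-2}$, an incoming edge corresponds to a letter $y$ with $yv \in \mathcal{W}_k^-(n)$, and an outgoing edge corresponds to a letter $x$ with $vx \in \mathcal{W}_k^-(n)$. Both conditions reduce to the same constraint on the new letter $z$: namely $k - \sum v_i - z \ge 0$ and the multiset $\{v_1, \ldots, v_{n-2}, z, k - \sum v_i - z\}$ forms an initial segment $\{0, 1, \ldots, h\}$ for some $h$. Hence the valid $y$'s coincide with the valid $x$'s, so indegree equals outdegree at every vertex.

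The main obstacle is weak connectivity, because the ``append a $0$'' move from the proof of Theorem \ref{Wn} would alter the total weight and is therefore unavailable. My plan is to fix the canonical target vertex $v^*$ as the lex-smallest element of $V(G(n,k))$ (well-defined since $k \le {n \choose 2}$ guarantees $\mathcal{W}_k(n) \ne \emptyset$) and to build an undirected walk from any $v$ to $v^*$ out of two moves. The first is rotation: any extension $w \in \mathcal{W}_k(n)$ of $v$ yields a closed length-$n$ cycle in $G(n,k)$ that visits the $n$ cyclic length-$(n-2)$ windows of $w$, since by Fact \ref{permWO} each $\rho^i(w)$ again lies in $\mathcal{W}_k(n)$. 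The second is extension-switching: at any window visited by such a cycle, one may pivot to a different extension $w' \in \mathcal{W}_k(n)$ sharing that window but whose trailing pair of letters differs, subject to the initial-segment constraint on the full multiset. The crux is to exhibit a monovariant, for instance the lexicographic order on the sorted multiset of the current vertex, that can always be strictly reduced by some combination of these two moves whenever the current vertex is not $v^*$. Once weak connectivity is confirmed in this way, Theorem \ref{euler} yields an Euler tour, and hence the required ucycle for $\mathcal{W}_k^-(n)$.
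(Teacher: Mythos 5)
Your graph construction and balance argument match the paper's: the paper's $G_k(n)$ has vertices $\mathcal{W}_k^{--}(n)$ and edges in bijection with $\mathcal{W}_k^-(n)$, and balance is disposed of by the same rotation/multiset observation (via Fact \ref{permWO}). The problem is that your treatment of weak connectivity is a plan rather than a proof, and the step you defer --- ``the crux is to exhibit a monovariant\ldots that can always be strictly reduced'' --- is exactly the hard content of the theorem. You never exhibit the monovariant, never verify that your two moves suffice to decrease it, and never check that the intermediate words stay inside $\mathcal{W}_k^-(n)$ (the initial-segment constraint on the letter set is what makes this delicate: transferring weight between letters can destroy the property that the letters form $\{0,1,\ldots,h\}$). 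Moreover, your two moves are weaker than what the paper needs: rotation cycles only realize cyclic shifts of a window, whereas the paper first proves Lemma \ref{LWkn} (connectivity of all \emph{permutations} of a fixed multiset, obtained by realizing adjacent transpositions with a specific four-edge back-and-forth path), and that full symmetric-group action is used repeatedly to bring a chosen pair of letters to the front before modifying them. It is not clear that rotations plus ``extension-switching'' of the last two letters generate the same connectivity, and you give no argument that they do.

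Concretely, the paper fills the gap you left as follows. It identifies the minimum vertex explicitly (Lemma \ref{L2Wkn}): $v = w^{--}$ for $w = 0^{n-a-2}[0,b-1]b^2[b+1,a]$ where $k = {a \choose 2} + {b \choose 1}$, and observes that $v$ is characterized as the unique sorted element of $\mathcal{W}_k(n)$ with at most one duplicate ($w_i = w_{i+1} > 0$). Then, for any $x$ with at least two duplicates, it uses Lemma \ref{LWkn} to sort and reposition letters, and applies an explicit four-edge undirected path that replaces two letters $x_i, x_j$ by $x_i - 1, x_j + 1$, preserving weight while either reducing the number of duplicates or moving the sorted multiset toward that of $v$; a case split on $\textsf{ht}(x)$ versus $\textsf{ht}(v)$ shows this terminates at $v$. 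Until you supply an analogue of this transfer move, prove it stays within valid weak orders, and verify termination, the connectivity claim --- and hence the theorem --- is not established.
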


To prove this theorem, we use the same approach as in the previous proof but need a few lemmas first to simplify our method.

\begin{lemma}\label{LWkn}
	Let $n \in \mathbb{Z}^+$, and let $M$ be some fixed multiset of size $n$.  Define the set $A$ to be the set of all permutations of $M$.  Then there exists a ucycle for $A$ using the prefix representation.
\end{lemma}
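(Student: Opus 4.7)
The plan is to mimic the proof of Theorem \ref{Wn}. I would build a digraph $G$ whose vertices are the $(n-2)$-letter prefixes of words in $A$, and whose edges correspond to the elements of $A$ via $w \mapsto (w_1 \ldots w_{n-2},\, w_2 \ldots w_{n-1})$; an Euler tour of $G$ then lists $A^-$ in cyclic order with consecutive prefixes overlapping in $n-2$ letters, which is exactly a ucycle for $A$ in the prefix representation. By Theorem \ref{euler}, it is enough to verify that $G$ is balanced and weakly connected, and the small cases $n \le 2$ are handled directly.

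For balance, at each vertex $v$ I would pair each in-edge corresponding to some $w \in A$ (so $w_2 \ldots w_{n-1} = v$) with the out-edge corresponding to the rotation $\rho(w) = w_2 \ldots w_n w_1$. Since $A$ is the full set of permutations of the multiset $M$, we have $\rho(w) \in A$, and the first $n-2$ letters of $\rho(w)$ are $w_2 \ldots w_{n-1} = v$, giving a bijection between in-edges and out-edges at $v$.

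The main obstacle is weak connectivity. In Theorem \ref{Wn} one can reduce any vertex to $00 \ldots 0$ by replacing a maximum letter with $0$, but with $M$ fixed that trick is unavailable. Instead I would exploit rotation orbits: for each $w \in A$ the edges corresponding to $w, \rho(w), \rho^2(w), \ldots, \rho^{n-1}(w)$ form a closed walk in $G$ (the target of the edge for $\rho^j(w)$ equals the source of the edge for $\rho^{j+1}(w)$); call this the \emph{rotation cycle} of $w$. Every edge of $G$ lies on exactly one such rotation cycle, so it suffices to show that all rotation cycles lie in a single weakly connected component.

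To link rotation cycles I would show that whenever $w, w' \in A$ differ by a single adjacent transposition at positions $i, i+1$, their rotation cycles share a vertex. Specifically, the cyclic shifts $u = \rho^{i+1}(w)$ and $u' = \rho^{i+1}(w')$ both place the swapped pair in positions $n-1, n$ and agree on positions $1$ through $n-2$, so the edges $u$ and $u'$ share the source vertex $u_1 \ldots u_{n-2}$, which lies on both rotation cycles. Since any two permutations of the multiset $M$ are connected by a sequence of adjacent transpositions, all rotation cycles lie in a single weakly connected component, $G$ is weakly connected, and Theorem \ref{euler} completes the proof.
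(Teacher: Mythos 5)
Your proposal is correct and takes essentially the same route as the paper: the same transition graph on $(n-2)$-letter prefixes, balance via the rotation bijection, and connectivity because rotations together with adjacent transpositions connect all permutations of $M$. The shared vertex you identify (the rotation placing the swapped pair into positions $n-1$ and $n$) is precisely the pivot vertex the paper reaches by an explicit four-edge undirected path, so your rotation-cycle packaging is only a cosmetic restatement of the paper's argument.
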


It is interesting to note that Lemma \ref{LWkn} can be applied to a multiset with all elements distinct.  In this case the set $A$ is the set of all permutations of an $n$-set, which has been a well-known and difficult ucycle problem \cite{Isaak, PermsUC}.

\begin{proof}
	Construct a graph $G_M$ with $$V(G_M) = \{v = a_1a_2 \ldots a_{n-2} \mid v = a^- \hbox{ for some } a\in A^-\}$$ $$\hbox{and}$$ $$E(G_M) = \{(v_1, v_2) \mid v_1 = a^- \hbox{ and } v_2 = a^+ \hbox{ for some } a \in A^-\}.$$  Note that the edges in this graph correspond to the elements of $A^-$, so we would like to find an Euler tour in $G_M$, which will produce a ucycle as desired.
	
	For any vertex $w_1 \ldots w_{n-4} w_{n-3}w_{n-2}$ corresponding to word $w = w_1 \ldots w_n$, there is a path in the underlying undirected graph (which we will simply call an undirected path) to the vertex $w_1 \ldots w_{n-4}w_{n-2}w_{n-3}$.  This path is:
	\begin{eqnarray*}
		w_1 \ldots w_{n-4}w_{n-3} w_{n-2} & \leftarrow & w_n w_1 \ldots w_{n-4} w_{n-3} \\
		& \leftarrow & w_{n-1} w_n w_1 \ldots w_{n-4} \\
		& \rightarrow & w_n w_1 \ldots w_{n-4}w_{n-2} \\
		& \rightarrow & w_1 \ldots w_{n-4} w_{n-2}w_{n-3}.
	\end{eqnarray*}

Thus we can always find an undirected path from one vertex to another whose difference is the transposition of elements $n-1$ and $n-2$.  Since we also have paths from a vertex $v = w^{--}$ to $\rho(v) = \rho(w)^{--}$ (where $\rho$ is the previously defined rotation function), we can find an undirected path between any two vertices that differ by adjacent transpositions.  Then, since the set of all adjacent transpositions generate all permutations of a set, there exists an undirected path between any vertices with the same multiset, so the graph $G_M$ is weakly connected.

Note that the graph $G_M$ must also be balanced, since having a fixed multiset $M$ ensures that any edge $$w_1 \ldots w_{n-2} \rightarrow w_2 \ldots w_{n-1}$$ can be balanced by the edge $$w_{n-1} w_1 \ldots w_{n-3} \rightarrow w_1 \ldots w_{n-2}.$$  Thus the graph is balanced and connected, and so is eulerian by Theorem \ref{euler}.

\end{proof}

To prove Theorem \ref{Wkn}, we will construct a new transition graph, $G_k(n)$.  Define $$V(G_k(n)) = \{v = w_1 \ldots w_{n-2} \mid v = w^- \hbox{ for some } w \in \mathcal{W}_k^-(n)\}$$ $$\hbox{and}$$ $$E(G_k(n)) = \{(v_1,v_2) \mid v_1 = w^- \hbox{ and } v_2 = w^+ \hbox{ for some } w \in \mathcal{W}_k^-(n)\}.$$  As before, note that this definition of edges is well-defined, for if $w_1w_2 \ldots w_{n-1} \in \mathcal{W}_k^-(n)$, then $w_{n-1} w_1 w_2 \ldots w_{n-2} \in \mathcal{W}_k^-(n)$.

Since the vertices in $G_k(n)$ are words of length $n-2$, we identify the vertex $v$ that is the minimum word in lexicographic order as the \textbf{minimum vertex}.  It will be useful to determine exactly what word $v$ is, and so we have the following fact, together with a lemma.

\begin{fact}\label{FWkn}
	\emph{(\cite{Knuth}, fasc.~3, p.~19)}  For every $k \in \mathbb{Z}^+$, there are unique $a,b \in \mathbb{Z}^+$ with $a > b$ so that $k = {a \choose 2} + {b \choose 1}$.
\end{fact}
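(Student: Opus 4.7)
The plan is to mirror the standard combinatorial number system (the base-$2$ combinadics). For existence, given $k \in \mathbb{Z}^+$, I would choose $a$ to be the largest positive integer with ${a \choose 2} \leq k$, and then set $b := k - {a \choose 2}$. Maximality of $a$ gives ${a+1 \choose 2} > k$, and the identity ${a+1 \choose 2} - {a \choose 2} = a$ lets me subtract to get $b < a$, which is exactly the required inequality $a > b$.

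For uniqueness, suppose ${a \choose 2} + b = {a' \choose 2} + b'$ with $a > b$ and $a' > b'$, and without loss of generality $a \leq a'$. If $a < a'$ then $a' \geq a+1$, and the chain
$${a \choose 2} + b \;<\; {a \choose 2} + a \;=\; {a+1 \choose 2} \;\leq\; {a' \choose 2} \;\leq\; {a' \choose 2} + b'$$
contradicts the assumed equality. Hence $a = a'$, and cancellation gives $b = b'$.

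The one subtle point — and the step I would be most careful about — is the boundary condition $b \in \mathbb{Z}^+$: the greedy construction above only guarantees $b \geq 0$, with $b = 0$ occurring precisely when $k = {a \choose 2}$ is a triangular number. Since the Fact is quoted verbatim from Knuth's combinatorial number system (which adopts the convention $b \geq 0$), reconciling this is a matter of conforming to that convention, not a substantive mathematical issue. The rest of the argument is essentially the elementary identity ${a+1 \choose 2} - {a \choose 2} = a$ together with the definition of $a$ as maximal, so I do not anticipate any serious obstacle.
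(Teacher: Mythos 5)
The paper offers no proof of this Fact at all --- it is quoted directly from Knuth --- so there is nothing internal to compare against. Your argument is the standard greedy proof of the degree-$2$ combinatorial number system and it is correct and complete: maximality of $a$ plus the identity ${a+1 \choose 2}-{a \choose 2}=a$ gives existence with $0 \leq b < a$, and your chain of inequalities for uniqueness is airtight (it uses only $b \geq 0$ and $b' \geq 0$, which hold under the intended convention).

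Your ``subtle point'' is worth taking seriously rather than dismissing: as literally stated, with $b \in \mathbb{Z}^+$ meaning $b \geq 1$, the Fact is false --- for $k=1$ the constraints $a > b \geq 1$ force ${a \choose 2} \geq 1$ and hence $b = 0$, a contradiction, and the same failure occurs for every triangular $k = {a \choose 2}$. The correct statement (and Knuth's) is $a > b \geq 0$, and this is also what the paper needs downstream, since the minimum-vertex construction in Lemma \ref{L2Wkn} explicitly allows the remaining weight $k - {w_n \choose 2}$ to be $0$. So your proof is right, and the one adjustment is to state the conclusion with $b \in \mathbb{N}$ rather than $b \in \mathbb{Z}^+$.
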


\begin{lemma}\label{L2Wkn}
	Fix $n, k \in \mathbb{Z}^+$.  Define $$w = 0^{n-a-2}[0,b-1]b^2[b+1,a]$$ with $a, b \in \mathbb{Z}^+$ chosen so that $k = {a \choose 2} + {b \choose 1}$.  Then the minimum vertex $v$ in $G_k(n)$ is $v = w^{--}$.
\end{lemma}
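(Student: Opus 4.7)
The plan is to establish two things: (1) the proposed $w$ lies in $\mathcal{W}_k(n)$, so that $w^{--}$ is a genuine vertex of $G_k(n)$; and (2) $w^{--}$ is strictly lex-smaller than every other vertex of $G_k(n)$. For (1), I would compute block by block: the total length adds to $n$, the set of letters appearing in $w$ is a contiguous initial segment $\{0,1,\ldots,a\}$ (making $w$ a valid weak order), and the weight telescopes to $k$ via the identity ${x+1 \choose 2} - {x \choose 2} = x$ applied to the blocks $[0,b-1]$, $b^2$, and $[b+1,a]$.

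For (2), I would argue in two stages. First, by Fact \ref{permWO} we may freely permute the letters of any $u \in \mathcal{W}_k(n)$, so the maximum number of leading zeros achievable in any vertex $u^{--} \in V(G_k(n))$ equals $\min\{z(u), n-2\}$, where $z(u)$ counts the zeros of $u$. I would then show that $w$ realizes the maximum of $z(u)$ over all $u \in \mathcal{W}_k(n)$. This reduces to an optimization: among multisets of size $n$ and total weight $k$ that cover some initial segment $\{0,1,\ldots,h\}$, the zero-count is maximized by minimizing the number of nonzero entries; Fact \ref{FWkn} pins this optimum down via the decomposition $k = {a \choose 2} + {b \choose 1}$, and the minimum-size nonzero block is precisely the block $[0,b-1]\, b^2\, [b+1,a]$ of $w$.

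Second, among vertices sharing this maximum leading-zero count, I would show $w^{--}$ minimizes the remaining letters position by position. Since the nonzero multiset of each competing $u$ must cover $\{1,2,\ldots,h\}$, a greedy ascending arrangement minimizes each position, yielding precisely the remaining letters of $w^{--}$ (with the two largest entries of $w$ deferred to positions $n-1, n$ and thereby discarded by the double-prefix operation). The principal obstacle will be the first stage: showing rigorously that no admissible multiset admits more zeros than the one associated with $w$. This requires examining all candidate heights $h$ and all residue decompositions of $k$, and then invoking the uniqueness in Fact \ref{FWkn} to single out the optimal configuration.
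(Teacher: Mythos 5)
Your overall strategy (reduce to multisets via Fact \ref{permWO}, then characterize the lexicographic minimum greedily) is close in spirit to the paper's, but your primary criterion is genuinely different: the paper argues that the two discarded letters, and hence the height, should be as large as possible, and then reads off the multiset as an initial segment plus one extra letter $b$ plus padding zeros; you instead maximize the number of zeros first. Your criterion is the more defensible opening move for lex-minimizing a sorted word, but it leaves a two-stage optimization, and the second stage has a real gap. Ties in the zero count genuinely occur between admissible multisets of \emph{different} heights, so your stage-two claim that ``the nonzero multiset of each competing $u$ must cover $\{1,2,\dots,h\}$'' (for a single $h$) and that a greedy ascending arrangement then settles the comparison does not go through as stated. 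Concretely, for $k=7$ and $n$ large, the nonzero multisets $\{1,1,2,3\}$ and $\{1,2,2,2\}$ both have four elements (hence the same, maximal, zero count), both cover their respective initial segments, and both have weight $7$; deciding between the sorted remainders $0^{n-4}11\cdots$ and $0^{n-4}12\cdots$ is exactly the height-maximization step that the paper takes as its starting point, and your plan never supplies it. So stage two is not a routine greedy sort; it requires an argument that, among minimum-support multisets, the one of maximal height sorts lexicographically first.

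A second, smaller issue: if you actually carry out the stage-one telescoping you propose, the word $w=0^{n-a-2}[0,b-1]b^2[b+1,a]$ as printed has $n$ letters but weight ${a+1 \choose 2}+b$, not ${a \choose 2}+b=k$, and its letter set is $\{0,1,\dots,a\}$, whereas the construction in the paper's own proof produces a word of height $a-1$ (multiset $\{0,1,\dots,a-1\}$ together with an extra letter $b$ and $n-a-1$ further zeros). So the verification you plan would not close as written; it would instead surface an off-by-one in the displayed formula that must be reconciled (either by shifting $a$ or by rewriting $w$ as $0^{n-a-1}[0,b-1]b^2[b+1,a-1]$) before the lemma can be proved at all. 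Detecting and resolving that discrepancy should be an explicit part of your step (1).
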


\begin{proof}
	If we are looking for the element $w \in \mathcal{W}_k(n)$ with $w^{--}$ minimum in lex order, we may consider only the elements of $\mathcal{W}_k(n)$ with the largest two elements in positions $n-1$ and $n$.  These elements must be as large as possible in order to get the smallest elements possible in positions 1 through $n-2$, so the string we desire will have letters of the largest height possible.  Note that if $\hbox{\textsf{ht}}(w) = h$ and $w \in \mathcal{W}_k(n)$, then we must have $k \geq {h+1 \choose 2}$.  Thus we choose $w_n$ as large as possible so that ${w_n \choose 2} \leq k < {w_n+1 \choose 2} = {w_n \choose 2} + w_n$.
	
	Now, by our choice of $w_n$, the remaining weight is $0 \leq k - {w_n \choose 2} < w_n$.  Thus $k - {w_n \choose 2} \in \{0, 1, \ldots , w_n\}$, and so $k - {w_n \choose 2}$ is equal to some natural number $b \leq m$.  Then we can add another letter $b$ to the word to obtain a word whose corresponding multiset of symbols consists of at most $n$ elements.  If the cardinality of the multiset is less than $n$, we add 0's to reach cardinality $n$.  Sorting this multiset from smallest element to largest we obtain the desired weak order $w$.
\end{proof}

Before we begin the proof of Theorem \ref{Wkn}, we define one more term.  We say that there is a \textbf{duplicate at index i} if $w_i = w_{i+1} > 0$ for some $w_1 w_2 \ldots w_n \in \mathcal{W}(n)$.  Now we are finally ready to prove Theorem \ref{Wkn}.

\begin{proof}[Proof of Theorem \ref{Wkn}]
	First, note that if $k > {n \choose 2}$, then it is not possible to construct a weak order on $[n]$ with weight $k$.  The maximum weight weak order possible is one with multiset $\{0, 1, \ldots , n-1\}$, which must have weight $$\sum_{i=0}^{n-1} i = {n \choose 2}.$$  We would like to show an undirected path from any vertex $x^{--} = x_1x_2 \ldots x_{n-2}$ to the minimum vertex $v^{--} = v_1v_2 \ldots v_{n-2}$ in $G_k(n)$.  Since $x \in \mathcal{W}_k(n)^{--}$, we may define $x_{n-1}, x_n$ accordingly so that $x \in \mathcal{W}_k(n)$.  By Lemma \ref{LWkn}, we may assume that $x_1 \leq x_2 \leq \ldots \leq x_{n-2} \leq x_{n-1} \leq x_n$.  Our first observation is that if $x$ has either no duplicates or one duplicate then we must have $x = v$, and hence $x^{--} = v^{--}$.  This follows from the fact that the minimum vertex achieves weight $k$ in the most compact way possible, i.e., with either zero or one duplicate.  Now we assume that $x$ has at least two duplicates, and we have several cases depending on the relationship between $\textsf{ht}(v)$ and $\textsf{ht}(x)$.
	
	If $\textsf{ht}(x) = \textsf{ht}(v) = h$, then using Lemma \ref{LWkn} we may rewrite both weak orders as $$x = [0,h]x_{h+2}x_{h+3} \ldots x_n \hbox{ and } v = [0,h]v_{h+2}v_{h+3} \ldots v_n$$ where $x_{h+2} \leq x_{h+3} \leq \cdots \leq x_n$ and $v_{h+2} \leq v_{h+3} \leq \cdots \leq v_n$.  Now since $x \neq v$ there must be indices $i<j$ with $i,j \in \{h+2, h+3, \ldots , n\}$ so that $x_i > v_i$ and $x_j < v_j$.  Using Lemma \ref{LWkn} again, we can reorder the letters of $x^{--}$ so that we have $$x^{--} = x_ix_j[0,h]x_{h+2}x_{h+3} \ldots x_{n-2}.$$  Then we have the undirected path:
	\begin{eqnarray*}
		x_ix_j[0,h]x_{h+2}x_{h+3} \ldots x_{n-2} & \rightarrow & x_j[0,h]x_{h+2}x_{h+3} \ldots x_{n-2}x_{n-1} \hbox{ (with } x_i \hbox{ missing)}\\
		& \rightarrow & [0,h]x_{h+2}x_{h+3} \ldots x_n \hbox{ (with }x_i,x_j \hbox{ missing}) \\
		& \leftarrow & (x_j+1)[0,h]x_{h+2} x_{h+3} \ldots x_{n-1} \\
		& \leftarrow & (x_i-1)(x_j+1)[0,h]x_{h+2} x_{h+3} \ldots x_{n-2}
	\end{eqnarray*}
	Continuing in this manner, we will eventually arrive at the vertex $v^{--}$.
	
	If $\textsf{ht}(x) < \textsf{ht}(v)$, then we consider the fact that $x$ must have at least two duplicates, say at $x_i$ and $x_j$.  Using Lemma \ref{LWkn}, rewrite $x^{--}$ with $x_i, x_j$ at the front, i.e. $$x^{--} = x_i x_j x_1 x_2 \ldots x_{n-2}.$$  Then we have the undirected path:
	\begin{eqnarray*}
		x_i x_j x_1 x_2 \ldots x_{n-2} & \rightarrow & x_j x_1 x_2 \ldots x_{n-2} x_{n-1} \hbox{ (with } x_i \hbox{ missing)}\\
		& \rightarrow & x_1 x_2 \ldots x_{n-2} x_{n-1} x_n \hbox{ (with }x_i,x_j \hbox{ missing})\\
		& \leftarrow & (x_j+1) x_1 x_2 \ldots x_{n-2} x_{n-1} \\
		& \leftarrow & (x_i-1)(x_j+1)x_1x_2 \ldots x_{n-2}
	\end{eqnarray*}
	We continue to decrease $x_i$ and increase $x_j$ until either $x_i = 0$ or $x_j = \textsf{ht}(x)+1$.  In either case, we have removed a duplicate.  Continuing, we will eventually arrive at a vertex with either 0 or 1 duplicates, which as stated previously must be the minimum vertex $v^{--}$.
	
	Next, we note that it is not possible to have $\textsf{ht}(x) > \textsf{ht}(v)$, as otherwise $x$ would have been chosen as the minimum vertex ($v$ was chosen so as to have maximum height).  Thus in all cases we have constructed an undirected path from $x^{--}$ to $v^{--}$, so the graph must be weakly connected.
	
Lastly, the graph must be balanced, since any outgoing edge from vertex $w_1 \ldots w_{n-2}$ to $w_2 \ldots w_{n-1}$ can be paired with the incoming edge $w_{n-1} w_1 \ldots w_{n-3}$ to $w_1 \ldots w_{n-2}$ and this pairing gives a unique incoming edge for each outgoing edge.

\end{proof}

Next, we show that there is always a universal cycle for $\mathcal{W}(n,h)$, the set of weak orders on $[n]$ with height $h$.  We will restrict our attention to the case when $h < n-1$, for if $h = n-1$ then we are considering the set of all permutations of an $n$-set.  Ucycles for permutations can be found easily using the prefix representation (using Lemma \ref{LWkn}), or for alternative methods see \cite{Isaak, PermsUC}.  Note that the set $\mathcal{W}(n,h)$ can also be described as the set of all surjective functions from $[n]$ to $\{0, 1, \ldots , h\}$.  Ucycles for such functions are discussed and constructed in \cite{Bechel}.

\begin{thm}\label{ontoFns}
	\emph{\cite{Bechel}}  A ucycle of surjective functions from $[n]$ to $\{0, 1, \ldots , h\}$ exists if and only if $n > h+1$.
\end{thm}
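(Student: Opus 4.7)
The plan is to apply the transition-graph framework of Theorem \ref{euler} in the same style as the proofs of Theorems \ref{Wn} and \ref{Wkn}. I would define $H(n,h)$ with vertices the length-$(n-1)$ strings over $\{0,\ldots,h\}$ that arise as a prefix $w^-$ of some surjection $w\in\mathcal{W}(n,h)$, and edges the surjections themselves, directed from $w^-$ to $w^+$. A vertex $p$ is of one of two kinds: \emph{free}, meaning $\{p_1,\ldots,p_{n-1}\}=\{0,\ldots,h\}$, or \emph{forced}, meaning exactly one value $m$ is missing from $\{p_1,\ldots,p_{n-1}\}$ (so the unique legal extension is $m$).

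For the ``only if'' direction, when $n<h+1$ no surjection exists, and when $n=h+1$ each surjection is a permutation of $\{0,\ldots,h\}$; as observed in the introduction, in that setting the first $n-1$ letters of any window determine the last, forcing any candidate sequence to be periodic of period $n$ and to exhibit only $n$ of the $n!$ permutations, which fails for $n\ge 3$.

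For the ``if'' direction I would verify the two hypotheses of Theorem \ref{euler}. Balance at a vertex $v$ is immediate: both its in- and out-degree equal the number of $y\in\{0,\ldots,h\}$ for which $\{y\}\cup\{v_1,\ldots,v_{n-1}\}$ covers $\{0,\ldots,h\}$, namely $h+1$ when $v$ is free and $1$ when $v$ is forced. I would then take $v^{\ast}=[0,h]0^{n-h-2}$ as the canonical target vertex; this is a valid free vertex because $n-1\ge h+1$.

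The main obstacle is weak connectivity, precisely because forced vertices have only one incoming and one outgoing edge and so lie on rigid functional strands. Given an arbitrary vertex $p$, I would first follow undirected forced edges until reaching a free vertex---the hypothesis $n>h+1$ guarantees a repeated entry appears in finitely many steps along the chain, at which point the chain must enter the free region. Inside the free region the undirected moves of Lemma \ref{LWkn} apply: a four-vertex detour realizes an adjacent transposition of the last two prefix symbols, and combined with the rotation $\rho$ this shows any two free prefixes of the same multiset share an undirected component. Finally, to traverse between different multisets I would adapt the duplicate-swapping routine from the proof of Theorem \ref{Wkn}: a short detour replaces a pair of duplicates $(x_i,x_j)$ by $(x_i-1,x_j+1)$ while keeping every intermediate prefix a valid vertex of $H(n,h)$, and iterating this reduction drains all excess weight into $v^{\ast}$.
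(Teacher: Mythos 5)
Your overall architecture (prefix/suffix transition graph, balance via rotation, connectivity by routing every vertex to a canonical one) matches the paper's, and your free/forced classification and the ``only if'' discussion are fine. But the last step of your connectivity argument has a genuine gap. The move you propose for travelling between multiset classes --- replacing a pair of duplicates $(x_i,x_j)$ by $(x_i-1,x_j+1)$ --- is the weight-preserving swap from the proof of Theorem \ref{Wkn}, and it is the wrong tool here, for two reasons. First, it cannot ``drain excess weight'': it leaves $\sum_i x_i$ invariant by construction, while the surjections in $\mathcal{W}(n,h)$ (and hence the free prefixes you need to connect) occur at many different weights. Second, it may not even be legal: if $x_i=x_j=h$ the swap produces the letter $h+1$, which is not in the alphabet. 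A concrete failure: take $n=4$, $h=1$. The prefix $011$ of the surjection $0111$ is free, but your within-multiset moves (rotations and adjacent transpositions) fix the multiset of the underlying word, and your swap is unavailable since $1+1>h$; so your toolkit never connects $011$ to $v^{\ast}=010$.

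The fix, and what the paper actually does, is to use a move that changes the weight: if the current word has a duplicated positive letter, rotate until that letter sits at the end of the window and then \emph{replace it by $0$}. This is legal precisely because only surjectivity onto $\{0,\ldots,h\}$ must be preserved --- the other copy of the duplicated letter keeps every value represented --- and it strictly increases the number of zeros, so iterating it terminates at a permutation of $0^{n-h}[1,h]$, after which rotations (and the transposition gadget, which the paper verifies keeps all of $0,\ldots,h$ present on every intermediate edge) finish the job. Your proposal as written establishes connectivity only within each weight class of free prefixes, not of the whole graph.
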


We can rewrite this theorem in terms of fixed-height weak orders, as follows.

\begin{corr}\label{Wnh}
	For all $n \in \mathbb{Z}^+$ and all $h \in \mathbb{N}$ with $0 \leq h < n-1$, there exists a universal cycle for
$\mathcal{W}(n,h)$.
\end{corr}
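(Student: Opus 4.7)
The plan is extremely short because Corollary \ref{Wnh} is essentially a reformulation of Theorem \ref{ontoFns}. I would first establish the bijection between $\mathcal{W}(n,h)$ and the set of surjective functions $[n] \to \{0,1,\ldots,h\}$, and then quote Theorem \ref{ontoFns}.

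Concretely, I would recall from Section \ref{DEFN} that each $w \in \mathcal{W}(n)$ is represented by the word $w_1 w_2 \ldots w_n$ where $w_j$ is the height of $j$. Thus $w$ is naturally viewed as a function $f_w : [n] \to \mathbb{N}$ by $f_w(j) = w_j$. By the definition of $\hbox{\textsf{ht}}(w)$, we have $\{w_1, w_2, \ldots, w_n\} = \{0,1,\ldots,\hbox{\textsf{ht}}(w)\}$, so $f_w$ is surjective onto $\{0,1,\ldots,\hbox{\textsf{ht}}(w)\}$. Restricting to $\hbox{\textsf{ht}}(w) = h$, the map $w \mapsto f_w$ is a bijection from $\mathcal{W}(n,h)$ onto the set of surjections $[n] \to \{0,1,\ldots,h\}$, and this bijection preserves the word representation used for ucycles (both identify a weak order with a length-$n$ string over $\{0,1,\ldots,h\}$ whose letter set is all of $\{0,1,\ldots,h\}$).

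Next I would note that the hypothesis $0 \leq h < n-1$ is exactly the condition $n > h+1$ appearing in Theorem \ref{ontoFns}. Applying that theorem to the set of surjections $[n] \to \{0,1,\ldots,h\}$ and transporting the resulting ucycle through the bijection above yields a ucycle for $\mathcal{W}(n,h)$.

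There is no real obstacle here; the only thing to be careful about is the word-vs-function identification, making sure that "appears as a subword of length $n$" in the ucycle sense coincides under the bijection for both interpretations. Since both representations use the same length-$n$ string of heights over the same alphabet, the two notions of ucycle agree verbatim, so the corollary follows immediately.
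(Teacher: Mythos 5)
Your reduction is correct, and it is exactly how the paper itself frames Corollary \ref{Wnh}: the text immediately preceding Theorem \ref{ontoFns} observes that $\mathcal{W}(n,h)$ is precisely the set of surjections from $[n]$ onto $\{0,1,\ldots,h\}$ (a word over $\{0,\ldots,h\}$ is the height representation of a weak order of height $h$ if and only if it uses every letter), and the hypothesis $0 \leq h < n-1$ is the condition $n > h+1$, so the corollary is a verbatim translation of the cited theorem. The one substantive difference is that the paper does not stop there: the proof environment that follows the corollary is a new, self-contained argument for Theorem \ref{ontoFns} itself, building the transition graph $G(n,h)$ on $\mathcal{W}^-(n,h)$, checking balance via rotation, and establishing weak connectivity by first connecting all permutations of the minimum vertex $0^{n-h-1}[1,h]$ through adjacent transpositions and then routing an arbitrary vertex to such a permutation by replacing duplicates with $0$'s. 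Your approach buys brevity at the cost of relying on the external reference \cite{Bechel} as a black box; the paper's approach makes the result independent of that reference and keeps the Euler-tour machinery uniform with the other theorems in the section. As a proof of the corollary as stated, yours is complete and valid.
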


We provide a shorter and more direct proof of Theorem \ref{ontoFns}, in terms of weak orders.

\begin{proof}
	We construct the standard transition graph $G(n,h)$ as follows.  We define
$$V(G(n,h))=\mathcal{W}^-(n,h)=\mathcal{W}^+(n,h)$$ $$\hbox{and}$$
\begin{eqnarray*}
	E(G(n,h)) & = & \{(v,w) \mid v \in \mathcal{W}^-(n,h), w \in \mathcal{W}^+(n,h), \hbox{ and } v_i = w_{i+1} \hbox{ for } 1 \leq i < n-1\}
\end{eqnarray*}
  If we think of the edge $(v,w)$ as being labeled with the word $v_1v_2 \ldots v_{n-1}w_{n-1}$, then it is clear that the set of edge labels corresponds to the set $\mathcal{W}(n,h)$.  Note that if $w_1w_2 \ldots w_n \in \mathcal{W}(n,h)$ then $w_nw_1 w_2 \ldots w_{n-1} \in \mathcal{W}(n,h)$ and so $G(n,h)$ is balanced.

To finish the proof, we must show that the graph is connected.  Define our minimum vertex in the graph to be $v^-=0^{n-h-1}[1,h]$.  Let $x^-=x_1x_2 \ldots x_{n-1}$ be an arbitrary vertex, and let $x_n$ be any symbol so that $x = x_1 x_2 \ldots x_{n-1}x_n \in \mathcal{W}(n,h)$.  We will show a path from $x^-$ to the minimum vertex $v^-$ by first illustrating that the subgraph induced by the permutations of the minimum vertex is weakly connected, and then describing a path from $x^-$ to some permutation of $v^-$.

Starting from the vertex $v^- = 0^{n-h-1}[1,h]$, we show that any sequence of adjacent transpositions applied to $v^-$ can be traversed by an undirected walk.  Since adjacent transpositions generate all permutations, this proves that all permutations of $v^-$ are connected.  Let $w_1w_2 \ldots w_{n-1}$ be a permutation of $v^-$.  We will show that the letters $w_i$ and $w_{i+1}$ may be transposed.  First, we note that $\{w_1, w_2, \ldots ,w_{n-1}\} = \{0, 1, 2, \ldots , h\}$ by the definition of $v^-$.  Then we define the desired walk as follows:
\begin{eqnarray*}
	w_1 w_2 \ldots w_{n-1} & \rightarrow & w_2 w_3 \ldots w_{n-1} w_i \\
	& \rightarrow & w_3 w_4 \ldots w_{n-1}w_i w_1 \\
	& \vdots & \hbox{ (rotations of the weak order } w_1w_2 \ldots w_{n-1} w_i) \\
	& \rightarrow & w_{i+1}w_{i+2} \ldots w_{n-1} w_i w_1 w_2 \ldots w_{i-1} \\
	& \rightarrow & w_{i+2} w_{i+3} \ldots w_{n-1} w_i w_1 w_2 \ldots w_{i-1} w_{i+1} \\
	& \rightarrow & w_{i+3} w_{i+4} \ldots w_{n-1} w_i w_1 w_2 \ldots w_{i-1} w_{i+1} w_i \\
	& \vdots & \hbox{ (rotations of the weak order } w_1w_2 \ldots w_{i-1} w_{i+1} w_i w_{i+2} w_{i+3} \ldots w_{n-1}w_i) \\
	& \rightarrow & w_1w_2 \ldots w_{i-1} w_{i+1} w_i w_{i+2} w_{i+3} \ldots w_{n-1}
\end{eqnarray*}
Note that along this path, every edge contains all letters from the set $\{w_1, w_2, \ldots , w_{n-1}\} = \{0, 1, 2, \ldots , h\}$, and so is a valid weak order from $\mathcal{W}(n,h)$.

Now let $x^- = x_1 x_2 \ldots x_{n-1} \in V(G(n,h))$ be arbitrary.  We want to define a path from $x^-$ to some permutation of the minimum vertex in $G(n,h)$ by repeatedly replacing any duplicates in $x$ with 0.  To create this path, we first define $x_n$ to be any symbol so that $x = x_1 x_2 \ldots x_n \in \mathcal{W}(n,h)$.  Then if $x$ has a duplicate at index $i$, we can replace it by following the path:
\begin{eqnarray*}
	x_1 x_2 \ldots x_{n-1} & \rightarrow & x_2 x_3 \ldots x_{n-1} x_n \\
	& \vdots & \hbox{ (rotations of } x) \\
	& \rightarrow & x_i x_{i+1} \ldots x_n x_1 x_2 \ldots x_{i-2} \\
	& \rightarrow & x_{i+1}x_{i+2} \ldots x_n x_1 x_2 \ldots x_{i-1} \\
	& \rightarrow & x_{i+2} x_{i+3} \ldots x_n x_1 x_2 \ldots x_{i-1} 0
\end{eqnarray*}
Repeating this procedure, we will eventually arrive at a vertex that is the prefix of some weak order $y$ that is a permutation of $0^{n-h}[1,h]$.  Following rotations, we will eventually arrive at a vertex that is a permutation of $v^-$.  Thus there exists a path from $x^-$ to $v^-$.

By Theorem \ref{euler}, since $G(n,h)$ is balanced and connected, it is eulerian.  Therefore we can find a
ucycle by following the Euler tour in $G(n,h)$.
\end{proof}

Finally, we prove the following result on the subset of fixed weight, fixed height weak orders on $[n]$.
\begin{thm}\label{Wknh}
	For every $n,k,h \in \mathbb{Z}^+$ with $k \leq {n \choose 2}$ and $0 \leq h < n$, there is a ucycle for $\mathcal{W}_k^-(n,h)$.
\end{thm}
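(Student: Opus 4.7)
The plan is to follow the transition-graph template used in Theorems \ref{Wkn} and \ref{Wnh}. Define $G_k(n,h)$ on the vertex set $\{v \mid v = w^- \text{ for some } w \in \mathcal{W}_k^-(n,h)\}$ with edge set $\{(v_1,v_2) \mid v_1 = w^-, v_2 = w^+, w \in \mathcal{W}_k^-(n,h)\}$, so that an Euler tour of $G_k(n,h)$ spells out a ucycle for $\mathcal{W}_k^-(n,h)$. By Theorem \ref{euler} it is enough to show that $G_k(n,h)$ is balanced and weakly connected. Balance is inherited from the earlier proofs: the outgoing edge at $w_1\ldots w_{n-2}$ labelled by $w = w_1 \ldots w_n \in \mathcal{W}_k(n,h)$ pairs with the incoming edge labelled by the rotation $w_n w_1\ldots w_{n-1}$, which also lies in $\mathcal{W}_k(n,h)$ by Fact \ref{permWO}, since rotation preserves the multiset and hence both the weight and the height.

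For connectivity I would take as minimum vertex $v^{--}$ the lex-smallest length-$(n-2)$ word that extends to some element of $\mathcal{W}_k(n,h)$. By an adaptation of Lemma \ref{L2Wkn}, this $v$ places the two largest symbols of its multiset at positions $n-1$ and $n$ with the remaining letters sorted in nondecreasing order, and its multiset is chosen to contain $\{0,1,\ldots,h\}$ with the extra weight $k - {h+1 \choose 2}$ concentrated on as few symbols as possible. Given an arbitrary vertex $x^{--}$, the plan is to construct an undirected walk to $v^{--}$ in two layers. In the multiset-preserving layer, Lemma \ref{LWkn} applies directly: its four-edge gadget permutes letters within a fixed multiset $M$ and therefore every intermediate edge automatically lies in $\mathcal{W}_k^-(n,h)$ whenever $M$ is the multiset of some word in $\mathcal{W}_k(n,h)$. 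In the multiset-changing layer I would iterate the increment/decrement gadget from the proof of Theorem \ref{Wkn}, which sends the vertex head $x_i x_j \cdots$ to $(x_i - 1)(x_j + 1)\cdots$ via a four-edge cycle, repeating until $\textsf{ms}(x) = \textsf{ms}(v)$.

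The main obstacle is guaranteeing that the multiset-changing gadget stays inside $\mathcal{W}_k^-(n,h)$. The gadget preserves weight automatically but may violate the height constraint either by deleting the last copy of some symbol $s \le h$ (dropping the height) or by creating the symbol $h+1$ (raising the height). My plan is to choose the pair $(i,j)$ so that $x_i$ has multiplicity at least two in $x$ and $x_j + 1 \le h$, and to show that such a pair exists whenever $\textsf{ms}(x) \neq \textsf{ms}(v)$. The case $h = n-1$ reduces to permutations of $\{0,1,\ldots,h\}$ and is handled by Lemma \ref{LWkn} alone; in the remaining case $h \le n-2$ the bound $n \ge h+2$ forces at least one duplicate in $x$ by pigeonhole, giving the needed flexibility. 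The case split on $\textsf{ht}(x)$ versus $\textsf{ht}(v)$ from the proof of Theorem \ref{Wkn} collapses here, since both heights equal $h$, leaving only the weight-redistribution work.
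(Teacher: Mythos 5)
Your setup---the transition graph on $\mathcal{W}_k^{--}(n,h)$, balance via rotation, a sorted minimum vertex, and Lemma \ref{LWkn} for the multiset-preserving moves---is exactly the paper's, but the multiset-changing step is where you diverge and where there is a genuine gap. A weak order of height $h$ must contain \emph{every} letter of $\{0,1,\ldots,h\}$, so your $\pm 1$ gadget needs more than ``$x_i$ has multiplicity at least two and $x_j+1\le h$'': you also need $x_i\ge 1$, and you need the letter $x_j$ itself to survive the increment. If $x_j$ occurs only once, replacing it by $x_j+1$ leaves a gap in the letter set and the intermediate edges are not weak orders at all. Concretely, take $n=7$, $h=4$, $k=14$ and $\textsf{ms}(x)=\{0,1,2,2,2,3,4\}$; your conditions permit $x_i=2$, $x_j=3$, producing the multiset $\{0,1,1,2,2,4,4\}$, which omits $3$. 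Moreover, even after correcting the conditions, the claim that such moves reach $\textsf{ms}(v)$ is not a pigeonhole observation: here the minimum vertex has multiset $\{0,0,1,2,3,4,4\}$, so the deficiencies of $x$ are at the letters $0$ and $4$, yet the \emph{only} legal move from $x$ is $2\mapsto 1$, $2\mapsto 3$, which changes the multiplicity of neither $0$ nor $4$; one must pass through the intermediate multiset $\{0,1,1,2,3,3,4\}$ before the second $0$ and second $4$ can be created. A greedy ``fix a deficiency'' argument therefore fails, and your route requires a separate connectivity lemma for the exchange graph on weight-$k$ size-$n$ multisets with full support $\{0,\ldots,h\}$.

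The paper avoids all of this with a different gadget. After sorting, both the current vertex $\mathbf{y}$ and the minimum vertex begin with a protected block $[0,h]$; the walk rotates the two offending surplus letters $y_i,y_j$ off one end and feeds in the \emph{target} letters $v_i,v_j$ at the other, never disturbing the block. Every intermediate word then contains all of $\{0,\ldots,h\}$ automatically, so the height constraint is satisfied for free and progress is measured simply by agreement with $\mathbf{v}$ in the surplus positions. To complete your proof, either prove the multiset-connectivity lemma your $\pm 1$ moves require, or adopt the paper's device of protecting $[0,h]$ and substituting the target letters directly.
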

\begin{proof}
	We construct the transition graph $G_k(n,h)$ as usual, with $$V(G_k(n,h)) = \mathcal{W}_k^{--}(n,h)$$ and $$E(G_k(n,h)) = \{(v,w) \mid v_{i+1} = w_i \hbox{ and } v_1v_2 \ldots v_{n-1}w_{n-1} \in \mathcal{W}_k^-(n,h)\}.$$  First, we note that the graph is even, since if $w_1 w_2 \ldots w_{n-1} \in \mathcal{W}_k^-(n,h)$, then $w_2 \ldots w_{n-1}w_1 \in \mathcal{W}_k^-(n,h)$.
	
	Next we must show that the graph is connected.  We define the minimum vertex $\textbf{v} = v_1 v_2 \ldots v_{n-2}$ by constructing a specific weak order $\textbf{w}$ in $\mathcal{W}_k(n,h)$, and then removing the last two elements.  Any weak order in $\mathcal{W}_k(n,h)$ must contain the letters $0, 1, 2, \ldots , h$, which has total weight $k' = \sum_{i=0}^h i$.  Let $\textbf{s} = s_1 s_2 \ldots s_{n-(h+1)}$ be the lexicographically minimum element of the set $\mathcal{B}_{k-k'}^{h+1}(n-(h+1))$, the set of words of length $n-(h+1)$ and weight $k-k'$ using the alphabet $\{0, 1, 2, \ldots , h\}$.  Then extend $s$ to be a weak order by defining $\textbf{w} = s_1 s_2 \ldots s_{n-(h+1)} 0 1 \ldots h$.   At this point we have $\textbf{w} \in \mathcal{W}_k^-(n,h)$, but we reorder the letters of $\textbf{w}$ so that $$\textbf{w} = [0,h]w_{h+2}w_{h+3} \cdots w_{n} \hbox{ where } w_{h+2} \leq w_{h+3} \leq \cdots \leq w_n.$$  Then define the minimum vertex to be $\textbf{v} = w_1 w_2 \cdots w_{n-2}$.
	
	Now we consider an arbitrary vertex $\textbf{x} = x_1 x_2 \cdots x_{n-2}$, and we will show that it is connected to the minimum vertex.  Since $\textbf{x} \in \mathcal{W}_k^{--}(n,h)$, we define $x_{n-1}$ and $x_n$ so that $x_1 x_2 \ldots x_n \in \mathcal{W}_k(n,h)$.  Then, by Lemma \ref{LWkn}, we know that $\textbf{x}$ must be connected to some vertex $$\textbf{y} = [0,h]y_{h+2}y_{h+3} \cdots y_{n-2} \hbox{ where } y_{h+2} \leq y_{h+3} \leq \cdots \leq y_{n-2}.$$  If $\textbf{y} = \textbf{v}$, then we are done.  Otherwise, there exists some $i,j \in \{h+2, h+3, \ldots , n-2\}$ so that $y_i > v_i$ and $y_j < v_j$.  Using rotations and Lemma \ref{LWkn}, there exists a path in $G_k(n,h)$ from $\mathbf{y}$ to the vertex $$y_iy_jy_1 y_2 \cdots y_{i-1}y_{i+1} \cdots y_{j-1} y_{j+1} \cdots y_{n-2}.$$  Define $y_{n-1}$ and $y_n$ arbitrarily so that $y_1 y_2 \cdots y_n \in \mathcal{W}_k(n,h)$.  Then we construct the following path in $G_k(n,h)$:
	\begin{eqnarray*}
		& & y_iy_jy_1 y_2 \cdots y_{i-1}y_{i+1} \cdots y_{j-1} y_{j+1} \cdots y_{n-2} \\
		& \rightarrow & y_jy_1y_2 \cdots y_{i-1}y_{i+1} \cdots y_{j-1}y_{j+1} \cdots y_{n-2} y_{n-1} \\
		& \rightarrow & y_1 y_2 \cdots y_{i-1}y_{i+1} \cdots y_{j-1} y_{j+1} \cdots y_{n-1}y_n \\
		& \rightarrow & y_2 \cdots y_{i-1}y_{i+1} \cdots y_{j-1}y_{j+1} \cdots y_n v_i \\
		& \rightarrow & y_3 \cdots y_{i-1}y_{i+1} \cdots y_{j-1}y_{j+1} \cdots y_n v_i v_j
	\end{eqnarray*}
	Reordering (by Lemma \ref{LWkn}) and shuffling/replacing elements, we can find a path to the vertex $$y_1y_2 \cdots y_{i-1}v_iy_{i+1} \cdots y_{j-1} v_j y_{j+1} \cdots y_{n-2}.$$  Note that by requiring that both $\textbf{v}$ and $\textbf{y}$ start with $[0,h]$, we ensure that these intermediate vertices contain all symbols in the set $\{0, 1, 2, \ldots , h\}$, and hence all edges represent valid weak orders in $\mathcal{W}^-_k(n,h)$.  Continuing this process, eventually we will arrive at $\textbf{v}$.  Since the graph is even and connected, it must be eulerian by Theorem \ref{euler}.
\end{proof}

\section{Overlap Cycle Results}

Many of our ucycle results for weak orders have corresponding ocycle results.  Ocycles were first introduced in \cite{Godbole} as a relaxation of ucycles when the maximum overlap size of $n-1$ might not be possible.  In this case, ocycle results may be used to discover the largest allowable overlap size with the hope of making it as large as possible.  We begin with an ocycle result that corresponds to the ucycle result given by Theorem \ref{Wn}.

\begin{thm}\label{OC}
	For all $n \in \mathbb{Z}^+$ and for all $s \in \mathbb{Z}^+$ with $1 \leq s \leq n-1$, there is an $s$-ocycle for $\mathcal{W}(n)$.
\end{thm}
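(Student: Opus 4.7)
My plan is to follow the transition-graph framework used throughout Section 3 and, in particular, the proof of Theorem \ref{Wn}. I would define the graph $G(n,s)$ whose vertex set consists of all $s$-length prefixes of weak orders in $\mathcal{W}(n)$ (which by Fact \ref{permWO} is also the set of all $s$-length suffixes), and whose edges are the elements of $\mathcal{W}(n)$, with each $w$ directed from $w_1\ldots w_s$ to $w_{n-s+1}\ldots w_n$. An Eulerian circuit in $G(n,s)$ visits each weak order exactly once and traces out a valid $s$-ocycle, so by Theorem \ref{euler} it suffices to show that $G(n,s)$ is balanced and weakly connected.

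Balance at a vertex $v$ comes directly from Fact \ref{permWO}: the map $w_1\ldots w_n \mapsto w_{s+1}\ldots w_n w_1\ldots w_s$ is a bijection from the edges out of $v$ to the edges into $v$, since the image has the same multiset as $w$ and therefore remains a weak order.

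For weak connectivity I would fix the minimum vertex $0^s$ and construct, for an arbitrary vertex $v = v_1\ldots v_s$, an undirected walk to $0^s$ that strictly increases the number of zeros in the current window at each macro-step. If the distinct letters of $v$ do not yet form a complete initial segment $\{0,1,\ldots,h\}$, I would traverse a single edge $v\,y$ whose tail $y$ supplies the missing letters together with extra zeros; after finitely many such moves I may assume the letter set of $v$ is an initial segment. In this case $v\cdot 0^{n-s}$ is a genuine weak order and the associated edge carries $v$ to $v_{n-s+1}\ldots v_s\cdot 0^{n-s}$, which equals $0^s$ outright when $s \le n-s$. In the remaining regime $s > n-s$, I would interleave this zero-filling edge with rotation-style walks of the same flavor as in Theorem \ref{Wn}, using Fact \ref{permWO} to permute the window so that a nonzero letter always sits in the first $n-s$ positions of the next edge, and hence each zero-filling edge strictly increases the zero count.

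The main obstacle is precisely this large-$s$ regime: because a single edge traversal rewrites only $n-s$ letters of the current window, a single zero-filling move cannot by itself reach $0^s$, and one must chain together several edges while checking that every intermediate window remains a valid prefix and that the zero count makes monotone progress. Fact \ref{permWO} is the workhorse throughout, as it grants complete freedom to rearrange the letters of any weak order into whatever configuration the next edge requires. Combined with balance, this yields an Eulerian $G(n,s)$ by Theorem \ref{euler}, and therefore the desired $s$-ocycle.
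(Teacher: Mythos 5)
Your graph $G(n,s)$, the balance argument via cyclic shift, and the small-$s$ connectivity step all match the paper's proof. The problem is exactly where you say it is: the regime $s>n/2$, and your sketch there does not close the gap. First, ``using Fact \ref{permWO} to permute the window'' is not a legitimate move: Fact \ref{permWO} lets you permute the letters of a \emph{weak order} when choosing which edge to traverse, but the vertices of $G(n,s)$ are fixed strings, and getting from a window to a permutation of that window requires exhibiting an actual walk in the graph. That is precisely the hard content of Lemma \ref{LWkn} and Lemma \ref{OCL}, and the latter needs $\gcd(s,n)=1$ --- Figure 1 shows such permutation-connectivity genuinely failing for $s=2$, $n=4$ in the fixed-multiset setting --- so you cannot simply assert it here. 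Second, your potential function (the number of zeros in the window) need not increase monotonically: after one zero-filling edge the new window is $v_{n-s+1}\ldots v_s0^{n-s}$, whose letter set may no longer be an initial segment of $\mathbb{N}$, in which case the next extension cannot be all zeros --- you are forced to append the missing nonzero letters to keep the word a weak order, and the zero count can stall or drop.

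The paper avoids both issues by using \emph{height} rather than zero count as the potential. From a window of height $h$ it extends by $n-s$ letters all of height less than $h$ (missing letters plus zeros), so every letter of height $h$ in the new window must lie in the overlap region; since each edge shifts the window by $n-s$ positions, those letters march toward the front and fall off after at most $\lceil s/(n-s)\rceil$ steps, strictly decreasing the height with no need to permute the window at all. Iterating down to height $0$ lands on $0^s$. If you replace your zero-counting and window-permuting steps with this height-reduction argument, your proof goes through.
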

To prove this, we have two cases:  $1 \leq s \leq \frac{n}{2}$ (vertices do not overlap to make a weak order), and $\frac{n}{2} < s \leq n-1$ (vertices must overlap to make a weak order).  First, we define the $s$-prefix, $w^{s-}$, and $s$-suffix, $w^{s+}$, of a word $w=w_1w_2 \cdots w_n$ as: $$w^{s-} = w_1w_2 \ldots w_s$$ $$\hbox{ and }$$ $$w^{s+} = w_{n-s+1}w_{n-s+2} \ldots w_n.$$

In light of Fact \ref{permWO}, we note that $\{w^{s-} \mid w \in \mathcal{W}(n)\} = \{w^{s+} \mid w \in \mathcal{W}(n)\}$.  In fact, these two sets are also the set $$\mathcal{W}^s(n) = \{w = w_1 w_2 \ldots w_s \mid w \hbox{ is a subword of some } w' \in \mathcal{W}(n)\}.$$

\begin{proof}[Proof of Theorem \ref{OC}]
	We define a transition graph $G^s(n)$ as follows.  Let $V(G^s(n))$ be the set of all possible overlaps, i.e. $V(G^s(n)) = \mathcal{W}^s(n)$.  Define $E(G^s(n))$ to contain one edge for each weak order by creating a directed edge $(u,v)$ for each weak order $w$ that begins with $u$ and ends with $v$.  We will show that this graph contains an Euler tour, which will give us an $s$-ocycle for $\mathcal{W}(n)$.
	
	First, $G^s(n)$ must have $d^+(u)=d^-(u)$ for all $u\in V(G^s(n))$, since any permutation of a weak order is again a weak order.  That is, if $u = u_1 u_2 \ldots u_s$ is a prefix of some weak order $u_1 u_2 \ldots u_n$, then we have incoming edge $u_{s+1} u_{s+2} \ldots u_n u_1 u_2 \ldots u_s$ and outgoing edge $u_1 u_2 \ldots u_s u_{s+1} \ldots u_n$.  Thus we can pair together each incoming edge with an outgoing edge, so we must have $d^+(u) = d^-(u)$.
	
	Lastly we must show that $G^s(n)$ is connected.  We will show that any vertex $v$ is connected to the vertex $0^s$.  Let $v = v_1 v_2 \ldots v_s \in V(G^s(n))$ with $h = \hbox{\textsf{ht}}(v)$.  If $s \leq \frac{n}{2}$, then we must have an edge $(v,u)$ to some vertex $u$ with $\hbox{\textsf{ht}}(u) < h$.  If $s > \frac{n}{2}$, the weak order $v_1 v_2 \ldots v_s v_{s+1} \ldots v_n$ is represented by the edge $(v, v_{n-s} v_{n-s+1} \ldots v_n).$  Note that \textsf{ht}$(v_{n-s} v_{n-s+1} \ldots v_n) \leq h$, and that any letter $v_i$ of maximum height in $v_{n-s}v_{n-s+1} \ldots v_n$ must have $n-s \leq i \leq s$.  Thus by repeating this procedure (at most $n$ times if $s=n-1$ and $v_{n-1}$ has maximum height in $v$), we reach some vertex $u \in V(G^s(n))$ with \textsf{ht}$(u) < h$.
	
	In either case, we have moved to a vertex with smaller height.  By repeating this procedure, we will eventually reach a vertex with height 0.  The only vertex with height 0 is the vertex $0^s$, and so we have arrived at our destination.
	
	Since our graph $G^s(n)$ is connected and $d^+(v)=d^-(v)$ for all $v \in V(G^s(n))$, we must have an Euler tour in the graph.  This Euler tour will translate to an $s$-ocycle on $\mathcal{W}(n)$.
\end{proof}

When considering $s$-ocycles for fixed weight weak orders on $[n]$, we notice that the following theorems follow immediately from their corresponding results about ucycles with very small adjustments.  Note, however, that we must consider whether or not $s$ and $n$ are relatively prime.  For example, if we consider all permutations of the set $\{1,2,3,4\}$, the transition graph for 2-ocycles is disconnected, as shown in Figure 1.

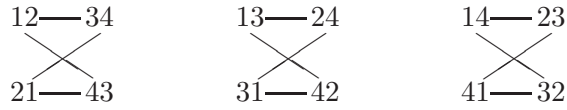
\begin{figure}
	\begin{center}
	\setlength{\unitlength}{10mm}
	\begin{picture}(8,1)
		\put(0,1){12}
		\put(0,0){21}
		\put(1,1){34}
		\put(1,0){43}
		\put(0.2,0.9){\line(3,-2){0.9}}
		\put(1.2,0.9){\line(-3,-2){0.9}}
		\qbezier(0.4,1.1)(1.4,1.1)(0.5,1.1)
		\qbezier(0.4,0.1)(1.4,0.1)(0.5,0.1)
		\put(3,1){13}
		\put(3,0){31}
		\put(4,1){24}
		\put(4,0){42}
		\put(3.2,0.9){\line(3,-2){0.9}}
		\put(4.2,0.9){\line(-3,-2){0.9}}
		\qbezier(3.4,1.1)(4.4,1.1)(3.5,1.1)
		\qbezier(3.4,0.1)(4.4,0.1)(3.5,0.1)
		\put(6,1){14}
		\put(6,0){41}
		\put(7,1){23}
		\put(7,0){32}
		\put(6.2,0.9){\line(3,-2){0.9}}
		\put(7.2,0.9){\line(-3,-2){0.9}}
		\qbezier(6.4,1.1)(7.4,1.1)(6.5,1.1)
		\qbezier(6.4,0.1)(7.4,0.1)(6.5,0.1)
	\end{picture}
	\end{center}
	\caption{Transition Graph: 2-ocycles for permutations of $\{1,2,3,4\}$}
\end{figure}

\begin{lemma}\label{OCL}
	Let $n, s \in \mathbb{Z}^+$ with $n \geq 2$ and $1 \leq s \leq n-2$ and let $M$ be some fixed multiset of size $n$.  Define the set $A$ to be the set of all permutations of $M$.  If gcd$(s,n) = 1$, then there is an $s$-ocycle for $A$.
\end{lemma}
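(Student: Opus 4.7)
The plan is to build the standard transition graph and invoke Theorem~\ref{euler}. Let $V(G_M^s)$ consist of all length-$s$ strings that occur as $s$-prefixes (equivalently, $s$-suffixes, by Fact~\ref{permWO}) of elements of $A$, and for each $w\in A$ include one directed edge from $w^{s-}$ to $w^{s+}$. An Euler tour of this multidigraph will produce an $s$-ocycle for $A$ by the usual correspondence, so I only need to establish balance and weak connectedness.

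Balance will be routine: at any vertex $u$, the cyclic shift $w\mapsto \rho^{n-s}(w) = w_{n-s+1}\ldots w_n w_1\ldots w_{n-s}$ is a bijection between outgoing edges at $u$ (permutations starting with $u$) and incoming edges at $u$ (permutations ending with $u$), since $A$ is closed under cyclic shifts.

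For weak connectedness I will combine two kinds of undirected walks. First, any two vertices $u,u'$ that are permutations of one another will be placed in one component by realizing adjacent transpositions through 2-edge walks: if $u'$ differs from $u$ by swapping entries $i$ and $i+1$ and $w\in A$ has $w^{s-}=u$, then the permutation $w'\in A$ obtained by applying that same transposition to the first $s$ letters of $w$ satisfies $w'^{s+}=w^{s+}$, so the walk $u\to w^{s+}\leftarrow u'$ connects them; iterating handles all permutations of a fixed $s$-multiset. Second, for any fixed $w\in A$, successively walking forward along the edges labeled $w,\rho^{n-s}(w),\rho^{2(n-s)}(w),\ldots$ passes through the vertices $(\rho^{k(n-s)}w)^{s-}$; since $\gcd(n-s,n)=\gcd(s,n)=1$, the residues $k(n-s)\bmod n$ exhaust $\{0,1,\ldots,n-1\}$, so every cyclic $s$-substring of $w$ is visited and all of them lie in one component.

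Full connectedness then reduces to connectivity of an auxiliary ``multiset graph'' on the $s$-submultisets of $M$, where multisets $M_1$ and $M_2$ are joined whenever they appear as consecutive cyclic $s$-windows of some permutation of $M$. For any adjacent pair $M_1=N\cup\{a\}$ and $M_2=N\cup\{b\}$ that differ in a single element, I will exhibit such a permutation by placing $a$ in position~$1$, an ordering of $N$ in positions $2$ through~$s$, $b$ in position~$s+1$, and the remaining letters of $M$ arbitrarily; the windows at positions $1$ and $2$ then realize the swap $M_1\to M_2$. Because any two $s$-submultisets of $M$ are connected by a chain of single-element swaps, combining these chains with the two walk types above delivers full connectedness. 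The one place where $\gcd(s,n)=1$ is essential is the cyclic-walk step: as Figure~1 illustrates, when $\gcd(s,n)>1$ the orbit of $\rho^{n-s}$ covers only $n/\gcd(s,n)$ of the cyclic substrings, so consecutive windows of a single permutation need not lie in the same component, and the graph genuinely decomposes.
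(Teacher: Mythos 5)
Your overall architecture --- one edge per word of $A$ from its $s$-prefix to its $s$-suffix, balance via rotation, and connectivity assembled from (i) transposition walks within a fixed $s$-multiset, (ii) the cyclic-window walk that uses $\gcd(s,n)=1$, and (iii) single-element swaps between $s$-submultisets --- is reasonable, and the paper itself only says this lemma follows by adjusting Lemma~\ref{LWkn}, so your level of detail is welcome. Steps (ii) and (iii) are correct as written. Step (i), however, contains a genuine error: you claim that if $w'$ is obtained from $w$ by transposing positions $i$ and $i+1$ (with $i+1\le s$), then $w'^{s+}=w^{s+}$. That holds only when positions $i$ and $i+1$ both avoid the last $s$ positions, i.e., when $i+1\le n-s$. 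The lemma permits $n/2<s\le n-2$, in which case the prefix and suffix windows overlap and the claim fails. Concretely, take $n=5$, $s=3$, $M=\{1,2,3,4,5\}$, $u=123$, $u'=132$ (the transposition at $i=2$), and $w=12345$: then $w^{3+}=345$, but every permutation beginning with $132$ has third letter $2$, so there is no edge from $132$ to $345$ and your two-edge walk $u\to w^{s+}\leftarrow u'$ does not exist. Since step (iii) relies on step (i) to connect all orderings of each $s$-submultiset, the connectivity argument as written is incomplete precisely in the regime $s>n/2$.

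The gap is repairable with a tool you already built. Because $s\le n-2$, the cyclic window of length $s$ of $w$ beginning at position $i+2$ (taken cyclically) avoids both positions $i$ and $i+1$, so it is a cyclic window common to $w$ and $w'$. Your step (ii) already places all $n$ cyclic windows of a single word in one component, so $w^{s-}$ and $w'^{s-}$ are joined through that shared window, and the rest of your argument then goes through unchanged. (Separately, a cosmetic slip in the balance paragraph: $\rho^{n-s}$ sends a word ending in $u$ to one beginning with $u$, so it maps incoming edges at $u$ to outgoing ones; the map you want in the stated direction is $\rho^{s}$. Either way the pairing is a bijection and balance holds.)
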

\begin{proof}
	Small adjustments to the proof of Lemma \ref{LWkn}.
\end{proof}

Note that as stated immediately following Lemma \ref{LWkn}, Lemma \ref{OCL} can be used on a multiset with distinct elements (and hence $A$ is a set of permutations of an $n$-set).  For further extensions on this, see \cite{ME}.

\begin{thm}
	Let $n,s,k \in \mathbb{Z}^+$ with $1 \leq s \leq n-2$ and $k \leq {n \choose 2}$.  If gcd$(s,n) = 1$, then there is an $s$-ocycle for $\mathcal{W}_k(n)$.
\end{thm}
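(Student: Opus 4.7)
The plan is to follow the template of Theorem \ref{Wkn}'s proof, constructing a transition graph $G_k^s(n)$ and verifying Theorem \ref{euler}'s hypotheses. Define $V(G_k^s(n))$ to be the set of length-$s$ strings that occur as the $s$-prefix (equivalently, by Fact \ref{permWO}, the $s$-suffix) of some $w \in \mathcal{W}_k(n)$, and include one directed edge $(w^{s-}, w^{s+})$ for each $w \in \mathcal{W}_k(n)$. An Euler tour of $G_k^s(n)$ then translates directly into the desired $s$-ocycle for $\mathcal{W}_k(n)$.

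Balancedness is routine: each outgoing edge from a vertex $u$ corresponding to $w = u_1 \ldots u_s w_{s+1} \ldots w_n$ pairs with the incoming edge to $u$ corresponding to the rotation $\rho^s(w) = w_{s+1} \ldots w_n u_1 \ldots u_s$, which remains in $\mathcal{W}_k(n)$ since rotation preserves the letter multiset and thus the weight. This pairing is a bijection between outgoing and incoming edges at every vertex.

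Weak connectivity is the crux, and is exactly where the hypothesis gcd$(s,n)=1$ enters (Figure 1 shows the conclusion can fail without it). I would establish connectivity in two stages, parallel to the proof of Theorem \ref{Wkn}. First, for any fixed letter multiset $M$ arising from some $w \in \mathcal{W}_k(n)$, Lemma \ref{OCL} supplies weak connectivity among the vertices arising as $s$-prefixes of permutations of $M$ (this is precisely where gcd$(s,n)=1$ is invoked). Second, to bridge different multisets of weight $k$, I would imitate the drop-and-insert walk of Theorem \ref{Wkn}: use Lemma \ref{OCL} to bring two letters $x_i, x_j$ to be modified to the front of the current vertex, traverse two forward edges (dropping $x_i$, then $x_j$), and then two backward edges (inserting $x_j+1$, then $x_i-1$). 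Iterating this step monotonically reduces any vertex to the $s$-prefix of the lex-minimum weak order in $\mathcal{W}_k(n)$, constructed in the spirit of Lemma \ref{L2Wkn}.

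The main obstacle will be verifying that every intermediate vertex of the drop-and-insert walk genuinely lies in $V(G_k^s(n))$: each intermediate edge must still represent a valid weak order in $\mathcal{W}_k(n)$, so the intermediate letter multisets must remain initial segments of $\mathbb{N}$ with multiplicities summing to $k$. These checks are essentially identical to those in the proof of Theorem \ref{Wkn}, and the constraint $s \leq n-2$ (together with the $n-s \geq 2$ letters revealed by each edge) leaves enough room in a length-$s$ vertex to execute the four-step walk. Once weak connectivity is established, Theorem \ref{euler} produces an Euler tour of $G_k^s(n)$, which yields the desired $s$-ocycle for $\mathcal{W}_k(n)$.
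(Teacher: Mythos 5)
Your proposal is correct and follows essentially the same route as the paper, which proves this theorem simply by citing ``small adjustments to the proof of Theorem \ref{Wkn}''; your write-up spells out exactly those adjustments (the $s$-prefix/suffix transition graph, balance via the rotation $\rho^s$, connectivity within a fixed multiset via Lemma \ref{OCL} where $\gcd(s,n)=1$ is used, and the drop-and-insert walk to bridge multisets of equal weight). If anything, your version is more explicit than the paper's.
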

\begin{proof}
	Small adjustments to the proof of Theorem \ref{Wkn}.
\end{proof}
\begin{thm}\label{OCWknh}
	Let $n,s,h,k \in \mathbb{Z}^+$ with $1 \leq s \leq n-2$, $k \leq {n \choose 2}$, and $0 \leq h < n$.  If gcd$(s,n) = 1$, then there is an $s$-ocycle for $\mathcal{W}_k(n,h)$.
\end{thm}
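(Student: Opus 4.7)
The plan is to follow the template set by Theorem \ref{Wknh}, replacing its length-$(n-2)$ prefix vertices with the general $s$-overlap windows that drive ocycle constructions. I would define a transition graph $G^s_k(n,h)$ whose vertex set is $\mathcal{W}^s_k(n,h) = \{w^{s-} \mid w \in \mathcal{W}_k(n,h)\}$, which by Fact \ref{permWO} equals $\{w^{s+} \mid w \in \mathcal{W}_k(n,h)\}$, and whose edge set contains one directed edge from $w^{s-}$ to $w^{s+}$ for each $w \in \mathcal{W}_k(n,h)$. An Euler tour in $G^s_k(n,h)$ lists every weak order in $\mathcal{W}_k(n,h)$ exactly once while respecting the $s$-overlap condition, so by Theorem \ref{euler} it suffices to verify that $G^s_k(n,h)$ is both balanced and weakly connected.

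Balance is the easy half: for every vertex $u = u_1 u_2 \ldots u_s$ and every weak order $w = w_1 \ldots w_{n-s} u_1 \ldots u_s$ coming into $u$, the rotation $\rho^s(w) = u_1 \ldots u_s w_1 \ldots w_{n-s}$ lies in $\mathcal{W}_k(n,h)$ by Fact \ref{permWO} and provides an outgoing edge from $u$, and this correspondence is a bijection. For connectivity I would copy the structure from the proof of Theorem \ref{Wknh}: construct the same minimum weak order $\mathbf{w} = [0,h]\, w_{h+2}w_{h+3}\cdots w_n$ with $w_{h+2} \leq \cdots \leq w_n$ built from the lex-min word in $\mathcal{B}_{k-k'}^{h+1}(n-(h+1))$, designate its $s$-prefix $\mathbf{v} = \mathbf{w}^{s-}$ as the minimum vertex, and then show that any vertex $\mathbf{x}$ can be joined to $\mathbf{v}$ by an undirected walk. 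Two kinds of moves are available: (i) rotations, which allow us to slide the $s$-window along any fixed weak order, and (ii) the swap maneuver from the proof of Theorem \ref{Wknh} (traverse an edge, then back up along an edge differing in a controlled position) which lets us replace $y_i$ by $v_i$ and $y_j$ by $v_j$ while preserving weight, height, and the required occurrence of every symbol in $[0,h]$.

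The main obstacle is the role of the gcd hypothesis, which enters exclusively through move (i). A single rotation $w \mapsto \rho(w)$ shifts the $s$-window by one letter, so the orbit of an $s$-window under repeated rotation of $w$ cycles through the $n/\gcd(s,n)$ distinct positions reachable in increments of $s$ modulo $n$; only when $\gcd(s,n) = 1$ does this orbit sweep out every $s$-subword of $w$, which is what lets us freely reorder the contents of the window using Lemma \ref{OCL}-style arguments. Without this condition the graph decomposes into $\gcd(s,n)$ classes of windows that never communicate, exactly as illustrated in Figure~1. Once the gcd condition is assumed, the Theorem \ref{Wknh} swap moves transcribe almost verbatim into $G^s_k(n,h)$: each intermediate edge in each swap still begins and ends with $s$-windows drawn from $\mathcal{W}_k^s(n,h)$, and by prepending $[0,h]$ exactly as in the earlier proof one ensures every intermediate edge encodes a genuine element of $\mathcal{W}_k(n,h)$, so weakly connectedness follows and Theorem \ref{euler} finishes the argument.
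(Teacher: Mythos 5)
Your proposal is correct and follows essentially the same route as the paper, whose entire proof of Theorem~\ref{OCWknh} is the single line ``Small adjustments to the proof of Theorem~\ref{Wknh}''; you have simply made those adjustments explicit (the $s$-window transition graph, balance via cyclic rotation, connectivity via the reordering and swap maneuvers of Theorem~\ref{Wknh} combined with Lemma~\ref{OCL}, and the observation that $\gcd(s,n)=1$ is what makes every window position reachable under rotation). In fact your write-up supplies more justification than the paper itself does, and your identification of exactly where the gcd hypothesis is used matches the paper's discussion surrounding Figure~1 and Lemma~\ref{OCL}.
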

\begin{proof}
	Small adjustments to the proof of Theorem \ref{Wknh}
\end{proof}

To produce an ocycle equivalent of Corollary \ref{Wnh}, we can simplify the proof since we are now dealing with overlaps of at most $n-2$ symbols; however we require that gcd$(s,n)=1$.
\begin{thm}\label{OCWnh}
	For all $n,s,h \in \mathbb{Z}^+$ with $1 \leq s \leq n-2$, gcd$(s,n)=1$, and $0 \leq h \leq n-1$, there is an $s$-ocycle for $\mathcal{W}(n,h)$.
\end{thm}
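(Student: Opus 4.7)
The plan is to mirror the construction and connectedness argument from Corollary \ref{Wnh} while replacing its cumbersome adjacent-transposition walk with a direct invocation of Lemma \ref{OCL} — the principal simplification available when $s\leq n-2$. I would construct the standard transition graph $G^s(n,h)$ with $V(G^s(n,h))=\mathcal{W}^s(n,h)$ and, for each $w\in\mathcal{W}(n,h)$, one directed edge from $w^{s-}$ to $w^{s+}$; an Euler tour in this graph corresponds precisely to an $s$-ocycle, so by Theorem \ref{euler} it suffices to establish balancedness and weak connectedness. Balancedness is immediate: the rotation $w\mapsto w_{s+1}w_{s+2}\ldots w_nw_1w_2\ldots w_s$ is a bijection, by Fact \ref{permWO}, between weak orders with $s$-prefix $u$ and weak orders with $s$-suffix $u$, pairing each out-edge at $u$ with a unique in-edge at $u$.

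For weak connectedness I would fix the reference weak order $\mathbf{w}^{\min}=0^{n-h}[1,h]$, whose multiset $M^{\min}=\{0^{n-h},1,2,\ldots,h\}$ minimizes weight among multisets of weak orders in $\mathcal{W}(n,h)$, and set $v^{\min}=(\mathbf{w}^{\min})^{s-}$. The argument splits in two. First, the subgraph of $G^s(n,h)$ induced by the $s$-prefixes of permutations of $\mathbf{w}^{\min}$ is weakly connected: applying Lemma \ref{OCL} to $M^{\min}$ (which uses $\gcd(s,n)=1$) produces an $s$-ocycle on this permutation class, whose Euler tour lives entirely in that induced subgraph. Second, any vertex $x^{s-}$ with $x\in\mathcal{W}(n,h)$ is connected to the $s$-prefix of some permutation of $\mathbf{w}^{\min}$ via a duplicate-removal walk. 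If $x$ contains a value $c>0$ at two distinct positions $i$ and $j$, then, because $\gcd(s,n)=1$, the chain of edges for $x,\rho^{n-s}(x),\rho^{2(n-s)}(x),\ldots$ traces all $n$ cyclic $s$-windows of $x$ and so in particular reaches the vertex $u^\ast=(\rho^i(x))^{s-}$, for which the rotation $x'=\rho^i(x)$ satisfies $x'_n=c$. Replacing position $n$ of $x'$ by $0$ produces a new $x''\in\mathcal{W}(n,h)$ (height is preserved because the second copy of $c$ is untouched), and $(x'')^{s-}=(x')^{s-}=u^\ast$ since $s\leq n-2$. Following the edge for $x''$ from $u^\ast$ therefore lands at $(x'')^{s+}$, which is itself the $s$-prefix of $\rho^{n-s}(x'')$ — a weak order whose multiset has strictly smaller weight than $x$'s. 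Iterating strictly reduces the multiset weight at each pass until we arrive at a vertex corresponding to a permutation of $\mathbf{w}^{\min}$, and the first sub-step then completes the walk to $v^{\min}$.

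The main obstacle is straightforward but finicky bookkeeping: every replacement must preserve height exactly $h$ (not merely $\leq h$), every intermediate $s$-window must lie in $\mathcal{W}^s(n,h)$, and every rotation step must correspond to a genuine edge of $G^s(n,h)$. The hypotheses $s\leq n-2$ and $\gcd(s,n)=1$ together carry all three requirements — the former by leaving position $n$ free to be altered without disturbing the $s$-prefix, and the latter by making the rotation chain surject onto all cyclic $s$-windows and by legitimizing the appeal to Lemma \ref{OCL}.
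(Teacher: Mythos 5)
Your proposal is correct and follows essentially the same strategy as the paper: build the transition graph on $\mathcal{W}^s(n,h)$, get balance from rotations, use Lemma \ref{OCL} (hence $\gcd(s,n)=1$) for connectivity within a multiset class, and reach the minimal multiset $\{0^{n-h},1,\dots,h\}$ by rotating unwanted letters outside the $s$-window and replacing them with zeros. The only differences are cosmetic --- the paper zeroes out the trailing $n-s$ letters in one block and sorts with $[0,h]$ in front, while you remove one duplicated positive letter per pass --- and your version actually spells out the height-preservation check more carefully than the paper does.
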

\begin{proof}
	We define our transition graph as usual, with $$V(G^s(n,h)) = \mathcal{W}^s(n,h)$$ $$\hbox{ and }$$ $$E(G^s(n,h)) = \{(v,w) \mid v_{i+1} = w_i \hbox{ and } v \in \mathcal{W}^{s-}(n,h), w \in \mathcal{W}^{s+}(n,h)\}.$$  In this transition graph, we allow multiple edges if multiple weak orders begin with the same prefix and end with the same suffix.  Clearly the graph is balanced, so we need only show that it is connected.  Define the minimum vertex $v^s$ to be the first $s$ letters of the weak order $v=[0,h]0^{n-h-1}$.  Let $x^s = x_1 x_2 \ldots x_s$ be an arbitrary vertex in the graph.  We assume that $x^s$ is an $s$-prefix of some $x = x_1x_2 \ldots x_n \in \mathcal{W}(n,h)$.  Applying Lemma \ref{OCL}, we may assume that $x$ is ordered so that we have $x = [0,h]x_{h+2}x_{h+3} \ldots x_n$.  If $s \leq h+1$, then the first $s$ letters of $x$ equals $v^s$ and we are done.  Otherwise, $s \geq h+2$, and we follow the edge that corresponds to the weak order $$[0,h]x_{h+2} x_{h+3} \ldots x_s 0 \cdots 0.$$  Applying Lemma \ref{OCL} again, we can reorder this weak order to find a path to the vertex that consists of the first $s$ letters of $[0,h]0 \cdots 0 x_{h+2} x_{h+3} \ldots x_s$, and we are one step closer to the minimum vertex.  Repeating, we will eventually arrive at the minimum vertex $v^s$.  Thus the graph is connected, and so contains an Euler tour, and hence an $s$-ocycle exists.
\end{proof}

\section{Connections to Other Combinatorial Objects}

Weak orders are equivalent to various other combinatorial objects.  For example, they are equivalent to ordered partitions, by the following theorem.

\begin{thm}\label{bij}
	\emph{(\cite{RPKnuth}, Problem 482)}  There is a bijection between the set of ordered partitions of $[n]$ with the set of weak orders on $[n]$.
\end{thm}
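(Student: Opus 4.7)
The plan is to exhibit an explicit bijection by reading the ordered partition off of the height representation introduced in Section \ref{DEFN}. Given a weak order $w = w_1 w_2 \ldots w_n \in \mathcal{W}(n)$ with $\hbox{\textsf{ht}}(w) = h$, define $\Phi(w) = (B_0, B_1, \ldots, B_h)$, where $B_i = \{j \in [n] \mid w_j = i\}$. In the opposite direction, given an ordered partition $\pi = (C_1, C_2, \ldots, C_m)$ of $[n]$, define $\Psi(\pi) = u_1 u_2 \ldots u_n$ by setting $u_j = i-1$ whenever $j \in C_i$.

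First I would check that $\Phi$ lands in the set of ordered partitions: because by definition $\{w_1, \ldots, w_n\} = \{0, 1, \ldots, h\}$, every block $B_i$ is nonempty; because each $j$ has a unique height, the $B_i$ are pairwise disjoint; and their union is $[n]$. Next I would check that $\Psi$ lands in $\mathcal{W}(n)$: the word $u_1 \ldots u_n$ produced by $\Psi(\pi)$ uses exactly the symbols $\{0, 1, \ldots, m-1\}$, so it corresponds (via the height encoding described in Section \ref{DEFN}) to a genuine weak order in which $j \preceq k$ iff $u_j \leq u_k$, and this relation inherits transitivity and completeness directly from $\leq$ on $\mathbb{Z}$.

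The final step is to verify that $\Phi$ and $\Psi$ are mutual inverses. Starting from $w$, applying $\Phi$ then $\Psi$ reassigns $w_j$ as the index (minus one) of the block containing $j$, which is exactly $w_j$ by construction of the $B_i$; so $\Psi \circ \Phi$ is the identity on $\mathcal{W}(n)$. Conversely, starting from $\pi = (C_1, \ldots, C_m)$, the word $\Psi(\pi)$ has height $m-1$ and its $i$-th level set is precisely $C_{i+1}$, so $\Phi \circ \Psi$ is the identity on ordered partitions.

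There is no real obstacle here; the only subtle point is matching conventions between the two encodings, so the main care is to be explicit about the off-by-one shift between block indices (starting at $1$) and heights (starting at $0$), and to note that the assertion $\{w_1, \ldots, w_n\} = \{0, 1, \ldots, h\}$, already recorded in Section \ref{DEFN}, is what guarantees that no block in $\Phi(w)$ is empty. Once those conventions are fixed, the bijection and its inverse are immediate from the height representation.
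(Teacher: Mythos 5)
Your proposal is correct. The paper itself gives no proof of this theorem---it simply cites \cite{RPKnuth} and illustrates the correspondence with the table of ordered partitions matching the earlier listing of $\mathcal{W}(3)$---and the bijection you construct (blocks of the ordered partition as the level sets of the height word, with the off-by-one shift between block indices and heights) is exactly the correspondence that table exhibits, so your argument supplies the routine verification the paper leaves implicit.
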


For example, the set $\mathcal{W}(3)$ can represent the ordered partitions (corresponding to our previous listing of $\mathcal{W}(3)$ in Section \ref{DEFN}):
$$\begin{array}{cccc}
	123 & 12|3 & 1|23 & 13|2 \\
	23|1 & 2|13 & 3|12 & 1|3|2 \\
	2|1|3 & 1|2|3 & 3|1|2 & 2|3|1 \\
	3|2|1
\end{array}$$
Using this theorem, we can easily obtain the corollaries to Theorems \ref{Wn}, \ref{Wnh}, \ref{OCWnh}.  While fixed-height weak orders correspond to ordered partitions into a fixed number of parts, it is unclear how fixed-weight weak orders to ordered partitions.

Weak orders on $[n]$ can also be described as permutations with ties.  A similar concept with subtle differences is that of tied permutations, discussed in \cite{TiedPerms}.   Using Leitner and Godbole's definition, words correspond to tournament rankings.  For example, if there is a tie for first place, then no one can win second place.  Under this definition, 113 is an allowable ranking on $[3]$ but 112 is not.  However, we note that 001 is a valid weak order on $[3]$, but 002 is not.  These differences produce distinct sets of strings, which, while order isomorphic, have different properties that must be preserved when creating ucycles and ocycles.  One can think of tied permutations and weak orders as distinct representations of ordered partitions.

\end{document}